\numberwithin{equation}{section}
\newcommand{\pmat}[1]{\begin{pmatrix} #1 \end{pmatrix}}
\newcommand{\case}[1]{\begin{cases} #1 \end{cases}}
\newcommand{\beq} {\begin{equation}}
\newcommand{\eeq} {\end{equation}}
\newcommand{\bdm} {\begin{displaymath}}
\newcommand{\edm} {\end{displaymath}}
\newcommand{\bit}{\begin{itemize}}
\newcommand{\eit}{\end{itemize}}
\newcommand{\bde}{\begin{description}}
\newcommand{\ede}{\end{description}}
\newcommand{\ben}{\begin{enumerate}}
\newcommand{\een}{\end{enumerate}}
\newcommand{\algn}[1]{\begin{align} #1 \end{align}}
\newcommand{\algns}[1]{\begin{align*} #1 \end{align*}}
\newcommand{\barr}{\begin{array}}
\newcommand{\earr}{\end{array}}
\newcommand{\half} {\ensuremath{\frac{1}{2}}}
\newcommand{\mc}[1]{\mathcal{#1}}
\newcommand{\LRp}[1]{\left( #1 \right)}
\newcommand{\LRa}[1]{\left< #1 \right>}
\newcommand{\mapover}[1]{\stackrel{#1}{\longrightarrow}}
\newcommand{\jump}[1] {\ensuremath{\left[\![{#1}]\!\right]}}
\newcommand{\ra}{\rightarrow}
\newcommand{\R}{{\mathbb R}}
\newcommand{\grad}{\operatorname{grad}}
\renewcommand{\div}{\operatorname{div}}
\newcommand{\tr}{\operatorname{tr}}
\newcommand{\supp}{\operatorname{supp}}
\newcommand{\bs}{\boldsymbol}
\newcommand{\lap}{\Delta}
\newcommand{\pd}{\partial}
\newtheorem{theorem}{Theorem}[section]
\newtheorem{lemma}[theorem]{Lemma}
\newcommand{\mfz}{\mathfrak{Z}}
\newcommand{\mfh}{\mathfrak{H}}
\newcommand{\Lm}{\Lambda}
\newcommand{\diam}{\operatorname{diam}}
\newcommand{\Alt}{\operatorname{Alt}}
\newcommand{\vol}{\operatorname{vol}}
\newcommand{\s}{\sigma}
\newcommand{\Q}{\mc{Q}}
\renewcommand{\S}{\mc{S}}
\newcommand{\tnorm}[1]{{\| #1 \|}_{\mc{X}_h}}
\begin{document}

\title[Local coderivatives]{Local coderivatives and approximation of Hodge Laplace problems}\thanks{The research leading to these results has received funding from the  European Research Council under the European Union's Seventh Framework Programme (FP7/2007-2013) / ERC grant agreement 339643. }

\author{Jeonghun J. Lee} 
\address{The Institute for Computational Engineering and Sciences, University of Texas at Austin, Austin, TX 78712, USA}
\email{jeonghun@ices.utexas.edu}
\urladdr{}
\author{Ragnar Winther}
\address{Department of Mathematics,
University of Oslo, 0316 Oslo, Norway}
\email{rwinther@math.uio.no}
\urladdr{http://www.mn.uio.no/math/personer/vit/rwinther/index.html}
\subjclass[2000]{Primary: 65N30}
\keywords{perturbed mixed methods, local constitutive laws}
\date{October 26, 2016, revised May 8, 2017}
\maketitle

\begin{abstract}
The standard mixed finite element approximations of Hodge Laplace problems associated with the de Rham complex 
are based on proper  discrete subcomplexes. As a consequence, the exterior derivatives, which are local operators, are computed exactly. 
However, the approximations of the associated coderivatives are nonlocal. 
In fact, this nonlocal property is an inherent consequence of the mixed formulation of these methods, and can 
be argued to be an undesired effect of these schemes. As a consequence, it has been argued, at least in special settings, 
that more local methods may have improved properties. In the present paper, we construct such methods by relying on a careful 
balance between the choice of finite element spaces, degrees of freedom, and numerical integration rules. Furthermore, we establish key convergence 
estimates based on a standard approach of variational crimes.
\end{abstract}

\section{Introduction}
The purpose of this paper is to discuss finite element methods for the Hodge Laplace problems of the de Rham complex where both the approximation 
of the exterior derivative and the associated coderivative are local operators. This is in contrast to the more standard mixed methods for these problems, as described in \cite{{AFW06, AFW10}}, where the coderivative is approximated by a 
nonlocal operator $d_h^*$. To discuss this phenomenon in a more familiar setting, at least for the numerical analysis community, consider the mixed method for the Dirichlet problem
associated to a second order elliptic equation of the form
\begin{equation}\label{Lap}
- \div (\bs{K}\grad  u ) = f \quad \text{in } \Omega, \qquad u|_{\partial \Omega} = 0,
\end{equation}
where the unknown function $u$ is a scalar field defined on a bounded domain  $\Omega$ in  $\R^n$, and $\partial \Omega$ is its boundary.
The coefficient $\bs{K}$ is matrix valued, spatially varying, and uniformly positive definite. 
When $\bs{K}$ is the identity, this problem corresponds to the Hodge Laplace problem studied below in the case when the unknown is an $n$--form.
The standard mixed finite element method for this problem, cf. \cite{BFBook}, takes the form:

Find $(\sigma_h,u_h) \in \Sigma_h \times V_h$ such that 
\algn{
\label{mixed-Lap}
\begin{split}
\LRa{\bs{K}^{-1}\sigma_h, \tau} - \LRa{u_h, \div \tau} &= 0, \qquad \quad \tau \in \Sigma_h,\\
\LRa{\div \sigma_h,v}  &= \LRa{f,v}, \quad v \in V_h,
\end{split}
}
where $\Sigma_h$ and $V_h$ are finite element spaces which are subspaces of
$H(\div, \Omega)$ and $L^2(\Omega)$, respectively, and where $\sigma_h$ is an approximation of
$- \bs{K}\grad u$. Here the notation $\LRa{\cdot, \cdot}$ is used to denote the $L^2$ inner product for both scalar fields 
and vector fields defined on $\Omega$.

For the typical examples we have in mind the finite element space $V_h$ will consist of discontinuous piecewise polynomials 
with respect to a nonoverlapping partition $\mathcal T_h$ of the domain $\Omega$. In this case the finite element method \eqref{mixed-Lap}
is referred to as a locally conservative or volume preserving method, since
\begin{equation}\label{conserve}
\int_{\Omega_0} f \, dx = \int_{\partial \Omega_0} \sigma_h \cdot \nu \, ds
\end{equation}
for any subdomain $\Omega_0$ of $\Omega$ which is a union of elements of $\mathcal T_h$. Here $\nu$ is the outward unit normal to the boundary of $\Omega_0$. In particular, \eqref{conserve} holds if $\Omega_0$ consists of a single  element of $\mathcal T_h$, and reflects 
a local conservation property of the continuous problem. In contrast to this, standard finite element methods for problems of the form 
\eqref{Lap}, based on the Dirichlet principle and subspaces of the Sobolev space $H^1(\Omega)$, will not admit a corresponding local conservation 
property, and this may lead to inaccurate approximations.
For example, for porous medium flow in a strongly heterogeneous and anisotropic setting it has been argued that locally conservative numerical methods give a better local representation of the physics of the problem, and therefore
a qualitatively better approximation, cf. \cite{Aavatsmark-etal-1998a,Aavatsmark-etal-1998b}.
As a consequence, there has been a substantial 
interest in developing conservative schemes. In addition to the mixed method \eqref{mixed-Lap} this 
includes various schemes referred to as 
finite volume schemes \cite{Droniou-2014,Eymard-Gallouet-Herbin-2000}, in particular the multi--point flux approximation schemes \cite{Aavatsmark-2002}, and  mimetic
finite differences \cite{Brezzi-Lipnikov-Shashkov-2005,Brezzi-Lipnikov-Simoncini-2005}.

The mixed method \eqref{mixed-Lap} is a volume preserving discretization in the sense of \eqref{conserve}, and it is based on a 
sound variational principle, the principle of complementary energy. On the other hand, the mixed method \eqref{mixed-Lap} fails to have another local property of the continuous problem since the operator, $u_h \mapsto \sigma_h$, defined by the first equation of \eqref{mixed-Lap}, and which approximates the operator $- \bs{K} \grad$, is nonlocal. 
Since  
$\Sigma_h$ is required to be a subset of  $H(\div, \Omega)$, the inverse of the so--called ``mass matrix", derived from the $L^2$ inner product 
$\LRa{\sigma_h, \tau}$ of the first equation of \eqref{mixed-Lap}, will be nonlocal. In other words, a local perturbation of $u_h$ will in general lead to a global perturbation of $\sigma_h$, and this purely numerical effect is sometimes considered to be undesirable. 
In fact, in many physical applications, the map $u_h \mapsto \sigma_h$ approximates a constitutive law which is represented as a local operator. 
Therefore, a central issue in the construction of many of the alternative finite volume schemes is to obtain volume preserving methods which are also based on local approximations of the fluxes 
$\sigma_h\cdot \nu$, cf. \eqref{conserve}. We should also mention that there is a relation between the desired local properties
described above and so-called mass lumping. This is a procedure which is 
often performed in the setting of time dependent problems, to remove the effect of mass matrices, to obtain explicit 
or simplified time stepping schemes. For examples of such studies we refer to \cite{Cohen-Joly-Roberts-Tordjman-2001,MonkCohen1998} and references given there. However, we will not study time dependent problems in this paper, even if our results can potentially be used in this context.

An early attempt to overcome the locality problem of the mixed method \eqref{mixed-Lap} in the two dimensional case,
and using the lowest order Raviart-Thomas space on a triangular mesh,
 was done in \cite{Baranger-Maitre-Oudin-1996}. 
 The discussion  was restricted to the case $\bs{K} $ equal to the identity.
This approach leads to a so-called two-point flux method.
However, this method has serious defects. 
In particular, in the general setting, where $\bs{K}$ is matrix valued and spatially varying,
the two-point flux method will not always be  consistent,  cf. \cite{Aavatsmark-2002,Aavatsmark2007}.
The multi-point flux approximation schemes were derived
to overcome this problem, and with Darcy flow and reservoir simulation as the main area of application.
We refer to the survey paper \cite{Aavatsmark-2002} by Aavatsmark for more details. The multi-point flux schemes are usually  described in the setting of  finite difference methods. 
However, for the analysis of these finite volume schemes it seems that the most useful approach is 
to be able to relate the schemes properly to a perturbed mixed finite element method, cf. \cite{Bause-Hoffman-Knabner-2010,Droniou-Eymard-2006,Klausen-Winther-2006b,Klausen-Winther-2006a,Wheeler-Yotov-2006}.
An alternative approach to overcome the defects of the two-point flux method was proposed by 
Brezzi et al. \cite{Brezzi-Fortin-Maridi-2006}. They proposed to use the lowest order Brezzi-Douglas-Marini space instead of 
the Raviart-Thomas space, and to perturb that mixed method by introducing a quadrature rule based on vertex values instead of edge values. They also showed satisfactory results in the three dimensional case.
 A similar method was proposed by Wheeler and Yotov \cite{Wheeler-Yotov-2006}, where also quadrilateral 
 grids are studied, and further extensions to hexahedral grids are studied in  \cite{Ingram-Wheeler-Yotov-2010,Wheeler-Xue-Yotov-2012}. 
 
 The results of the present paper can be seen as further generalizations of the results of 
 \cite{Brezzi-Fortin-Maridi-2006,Wheeler-Yotov-2006}. In fact, the mixed method 
\eqref{mixed-Lap} corresponds to a special case  of the finite element methods studied in \cite{AFW06, AFW10} for the 
more general Hodge Laplace problems. 
Furthermore, the lack of locality described above is a common feature of almost  all of these finite element methods.
Therefore, the purpose of the present paper is to construct corresponding perturbations of the 
mixed methods for the Hodge Laplace problems which will overcome the problem of lack of locality, at least in the low order case.
As a consequence, the potential applications of the results of this paper are not restricted to 
Darcy flow and similar problems, but may for example also be used to localize various methods for Maxwell's equations.
We refer to \cite{AFW06, AFW10} for more details on the various  realizations of the Hodge Laplace problems. We should mention that the concept of locality used in this paper refers exclusively  to the discrete coderivative operators. More precisely, this means that our approximations do not rely on 
local, and explicitly defined, discrete Hodge star operators. In this respect, our approach is different from the 
methods of ``discrete exterior calculus'', as presented in \cite{DEC05,Hirani-thesis}, cf. also \cite{Hiptmair-2001}.  By construction, these methods utilize local approximations of the Hodge star operators, and as a consequence both the exterior derivative $d$ and the coderivative $d^*$ are approximated by local operators. However, 
so far a satisfactory convergence theory seems still to be lacking for these methods. In contrast to this, for the methods constructed here we derive convergence results based on a standard approach of finite elements and variational crimes. 

%
%

The present paper is organized as follows. In the next section we will present a brief review
of exterior calculus, the de Rham complex and its discretizations.  In Section 3 we will discuss an abstract error analysis, in the setting of Hilbert complexes, which 
we will find useful in more concrete applications below. Such applications, in the setting of 
finite element discretizations with respect to simplicial meshes, will be discussed in Section 4, while methods based on
cubical meshes are discussed in Section 5. Finally, we summarize our results with some concluding remarks in Section 6.

\section{Preliminaries}\label{prelim}
Throughout this paper we will adopt the language of finite element exterior calculus as in
\cite{{AFW06, AFW10}}.  We assume that $\Omega \subset \R^n$ is a bounded
polyhedral domain, and we will study finite element approximations of differential forms defined on $\Omega$.
More precisely, we consider maps defined on $\Omega$ with values in the space 
$\Alt^k(\R^n)$, the space of alternating $k$--linear maps on $\R^n$. 
For $1 \leq k \leq n$ let $\Sigma(k)$ be the set of increasing injective maps from $\{1, ..., k\}$ to $\{1, ..., n \}$. Then 
we can define an inner product on $\Alt^k(\R^n)$ by the formula
\[
\LRa{ a,b}_{\Alt} = \sum_{\sigma \in \Sigma(k)}
a(e_{\sigma_1},\ldots,e_{\sigma_k})b(e_{\sigma_1},\ldots,e_{\sigma_k}),
\quad a,b \in \Alt^k(\R^n),
\]
where $\sigma_i$ denotes $\sigma(i)$ for $1 \leq i \leq k$ and $\{e_1,\ldots,e_n\}$ is any orthonormal basis of $\R^n$. 
Differential forms are maps defined on a spatial domain $\Omega$ with values in 
$\Alt^k (\R^n)$.
If $u$ is a differential $k$-form and $t_1,  \ldots ,t_k$ are vectors in $\R^n$, then $u_x(t_1, \ldots, t_k)$ denotes the value of 
$u$ applied to the vectors $t_1,  \ldots ,t_k$ at the point $x \in \Omega$. 
The differential form $u$ is an element of the  space $L^2 \Lm^k(\Omega)$ if and only if 
the map
\[
x \mapsto u_x(t_1, \ldots, t_k)
\]
is in $L^2(\Omega)$ for all 
tuples $t_1,  \ldots ,t_k$.
In fact, $L^2 \Lm^k(\Omega)$ is a Hilbert space with inner product given by
\[
\LRa{u,v} = \int_{\Omega} \LRa{u_x,v_x}_{\Alt} \, dx.
\]
The exterior derivative of a $k$-form $u$ is a $(k+1)$-form $du$ given by
\[
du_x(t_1,   \ldots t_{k+1}) = \sum_{j=1}^{k+1} (-1)^{j+1} \partial_{t_j} u_x(t_1, 
\ldots, \hat t_j, \ldots ,t_{k+1}),
 \]
 where $\hat t_j$ implies that $t_j$ is not included, and $\partial_{t_j}$ denote the directional derivative.
The Hilbert space $H\Lambda^k(\Omega)$ is the corresponding  space of 
$k$-forms $u$ on $\Omega$, which is in $L^2 \Lm^k(\Omega)$, and where
its exterior derivative, $du = d^ku$, is also in $L^2 \Lm^{k+1}(\Omega)$.  The $L^2$ version of the de Rham complex then takes
the form
 \[
H\Lambda^0(\Omega)
\xrightarrow{d^0} H\Lambda^1(\Omega) \xrightarrow{d^1}
\cdots \xrightarrow{d^{n-1}}
H\Lambda^n(\Omega).
\]
In the setting of $k$--forms, the  Hodge Laplace problem takes the form 
\begin{equation}\label{hodge-Lap-strong}
Lu = (d^*d + dd^*)u = f,
\end{equation}
where $d = d^k$ is the exterior derivative mapping $k$--forms to $(k+1)$--forms, and the coderivative  $d^*= d_k^*$ can be seen as the formal
adjoint of $d^{k-1}$. Hence, the Hodge Laplace operator $L$ above is more precisely expressed as $L = d_{k+1}^*d^k + d^{k-1}d_k^*$.
A typical model problem studied in \cite{AFW06, AFW10} is of the form \eqref{hodge-Lap-strong} and with appropriate boundary conditions.
The mixed finite element methods are derived from a weak formulation, where $\sigma = d^* u$ is introduced as an auxiliary variable. It is of the form:

Find $(\sigma, u) \in H\Lambda^{k-1}(\Omega) \times H\Lambda^{k}(\Omega)$ such that 
\begin{align}\label{hodge-mixed-1}
\begin{split}
\LRa{\sigma, \tau} - \LRa{u, d ^{k-1}\tau} &= 0, \qquad \quad \tau \in H\Lambda^{k-1}(\Omega), \\
\LRa{d^{k-1}\sigma,v} + \LRa{d^ku,d^kv} &= \LRa{f,v}, \quad v \in H\Lambda^{k}(\Omega).
\end{split}
\end{align}
Here  $\LRa{\cdot,\cdot}$ denotes the inner products of all the spaces of the form  $L^2 \Lm^j(\Omega)$ which appears in the formulation,
i.e., $j = k-1,k,k+1$.
We refer to Sections 2 and 7 of \cite{AFW06} for more details.
We note that  only the exterior derivate $d$ is used explicitly in the weak  formulation above, while the relation $\sigma = d_k^*u$ 
is formulated weakly in the first equation. 
The formulation also contains the proper natural boundary conditions.
The problem \eqref{hodge-mixed-1} with $k=n-1$ corresponds to a weak formulation 
of the elliptic equation \eqref {Lap} in the case when the coefficient $\bs{K}$ is the identity matrix.
In fact, variable coefficients can also easily be included in the weak formulations 
 \eqref{hodge-mixed-1} by changing the $L^2$ inner products, see \cite[Section 7.3]{AFW06}.
 However, throughout most of the discussion  below we will restrict  the discussion 
to the constant coefficient case. But we emphasize that the extension of the discussion to problems with variable coefficients which are piecewise constants with respect to 
the mesh, is indeed straightforward, cf. Section~\ref{conclusion} below.
In general, the solution of the system \eqref{hodge-mixed-1} may not be unique. Depending on the topology of the domain $\Omega$
there may exist nontrivial harmonic forms, i.e., nontrivial elements of the space 
\[
\mfh^k(\Omega) = \{ v \in H\Lm^k (\Omega) \;:\; d v = 0 \text{ and }\LRa{v, d \tau} = 0 \text{ for all }\tau \in H\Lm^{k-1} (\Omega) \} .
\]
Hence, to obtain a system with a unique solution, an extra condition requiring orthogonality with respect to the harmonic forms,
is usually included.

The basic construction in finite element exterior calculus is
of a corresponding subcomplex
\[
V_h^0\xrightarrow{d} V_h^1 \xrightarrow{d}
\cdots \xrightarrow{d}
V_h^n,
\]
where the spaces $V_h^k$ are finite dimensional subspaces of
$H\Lambda^k(\Omega)$. In particular, the discrete spaces should have the property that 
$d (V_h^{k-1}) \subset V_h^k$.
The finite element methods studied in \cite{AFW06, AFW10} are based on the weak formulation \eqref{hodge-mixed-1}.
These methods are obtained by simply replacing the Sobolev spaces $H\Lambda^{k-1}(\Omega)$ and $H\Lambda^{k}(\Omega)$ 
by the finite element spaces $V_h^{k-1}$ and $V_h^k$.
More precisely, we are searching for a triple $(\tilde{\sigma}_h,\tilde{u}_h, \tilde{p}_h) \in V_h^{k-1} \times V_h^k \times \mfh_h^k$ such that 
\begin{align}
\notag \LRa{\tilde{\sigma}_h, \tau} - \LRa{d \tau, \tilde{u}_h}  &= 0 , & &   \tau \in V_h^{k-1}, \\
\label{hodge-mixed-1-h}\LRa{d\tilde{\sigma}_h, v} + \LRa{d\tilde{u}_h, dv} + \LRa{\tilde{p}_h, v} &= \LRa{f, v}, & &   v \in V_h^k, \\
\notag \LRa{\tilde{u}_h, q} &= 0, & &   q \in \mfh_h^k,
\end{align}
where the space $\mfh_h^k$, approximating the harmonic forms, is given by 
\[
\mfh_h^k = \{ v \in V_h^k \;:\; d v = 0 \text{ and }\LRa{v, d \tau} = 0 \text{ for all }\tau \in V_h^{k-1} \} .
\]
The goal of this paper is to study certain perturbations of the system \eqref{hodge-mixed-1-h}. Therefore, we have used an 
unconventional notation for the solution of \eqref{hodge-mixed-1-h} in order to be in position 
to use the more standard notation, $(\sigma_h, u_h, p_h)$, for the solution of the perturbed problems.

The exterior derivative appearing in the method is the exact operator $d$, restricted to the spaces $V_h^{k-1}$ and $V_h^k$,
while  no  $d^*$ operator appears. Instead, an approximation of $d^*$ is implicitly defined by the system \eqref{hodge-mixed-1-h}.
More precisely, the operator $d_h^*: V_h^k \to V_h^{k-1}$ is defined by the first equation of 
the system \eqref{hodge-mixed-1-h}, i.e.,
\begin{equation}\label{dh*}
\LRa{ d_h^* u, \tau} = \LRa{u, d\tau}, \quad u \in V_h^k, \, \tau \in V_h^{k-1}.
\end{equation}
In fact, just as we have explained for the special discrete problem \eqref{mixed-Lap} above,
the continuity requirements of the spaces $V_h^{k-1}$ will in general have the effect  the operator $d_h^*$ is nonlocal,
in contrast to the continuous case where $d^*$ is a local operator. Motivated by this our purpose in this paper is to
construct perturbations of the standard mixed methods which are converging, but also have the property that the corresponding operator 
$d_h^*$ is local. We will achieve this by replacing the $L^2$ inner product $\LRa{ d_h^* u, \tau}$ in \eqref{dh*} by a proper
approximation,
and by choosing the spaces $V_h^{k-1}$ and $V_h^k$ carefully. 


If $\{\mc{T}_h\}$ is a family of simplicial meshes, as described for example in \cite[Section 5]{AFW06}, 
then the spaces $V^k_h$ are taken from
two main families.  Either $V^k_h$ is of the form
$\mc{P}_r\Lambda^k(\mc{T}_h)$, consisting of all elements of
$H\Lambda^k(\Omega)$ which restrict to polynomial $k$-forms of degree
at most $r$ on each simplex $T$ in the partition $\mc{T}_h$, or
$V^k_h=\mc{P}^-_r\Lambda^k(\mc{T}_h)$, which is a space which sits
between $\mc{P}_r\Lambda^k(\mc{T}_h)$ and $\mc{P}_{r-1}\Lambda^k(\mc{T}_h)$. 
In  addition, both spaces have the property that the elements have continuous traces on each simplex in $\Delta_{n-1}(\mc{T}_h)$, and as a consequence they are subspaces of $H \Lm^k(\Omega)$.
Here we adopt the notation that $\Delta_k(\mc{T}_h)$ denotes the set of all the $k$--dimensional subsimplexes 
of the triangulation $\mc{T}_h$. 
The simplest stable discretization of the Hodge Laplace problem is obtained by choosing both spaces $V_h^{k-1}$ and $V_h^k$
to be the classical Whitney forms, i.e., we take $V_h^{k-1} = \mc{P}^-_1\Lambda^{k-1}(\mc{T}_h)$
and $V_h^{k} = \mc{P}^-_1\Lambda^{k}(\mc{T}_h)$. For the space  $\mc{P}^-_1\Lambda^{k}(\mc{T}_h)$ the degrees of freedom 
are simply the integrals of the traces over each element of  $\Delta_k(\mc{T}_h)$. The corresponding degrees of freedom for 
the corresponding linear space, $\mc{P}_1\Lambda^{k}(\mc{T}_h)$, is the corresponding integrals 
over each element of  $\Delta_k(\mc{T}_h)$ against all scalar linear test functions, cf. \cite[Theorem 4.10]{AFW06}.
As we will see below, this extra local freedom, represented by linear test functions on the $k$-dimensional subsimplexes, 
will be  crucial for our construction of local methods below.

%
%

\section{Abstract error analysis and variational crimes}\label{sec-abstract}
Discussion of  finite element methods and variational crimes in various settings 
is standard, and can be found in textbooks like \cite{MR3097958,Brenner-Scott-book}.
We will find it useful to base our analysis below on some abstract error estimates in the general setting of Hilbert complexes. In this respect our discussion in this section resembles parts of the theory presented in \cite{MR2915563}.
However, 
the main result here, cf. Theorem~\ref{thm:main} below, is targeted more directly to the applications later in the paper.
Our notation and set--up are basically taken from \cite[Chapter 3]{AFW10}.

A closed Hilbert complex $(W, d)$ consists of a sequence of Hilbert spaces $W^k$ with index $k$ and a sequence of closed, densely-defined linear operators $d^k : W^k \ra W^{k+1}$ such that $d^{k+1} \circ d^k = 0$. The sequence of operators $d$ is called a differential. A Hilbert subcomplex of $(W,d)$ is a Hilbert complex $(\bar{W}, \bar{d})$ such that $\bar{W}^k$ is a subspace of $W^k$ and $\bar{d}^k = d^k|_{\bar{W}^k}$ for each $k$. The domain complex of a closed Hilbert complex $(W,d)$ is the Hilbert subcomplex $(V,d)$ such that $V^k \subset W^k$ is the domain of $d^k$ for all $k$. 
We use $\LRa{\cdot, \cdot}$ and $\| \cdot \|$ to denote the inner product and the corresponding norm on $W^k$, respectively, but we omit index $k$ since it is usually clear from the  context. Similarly, we use $\LRa{\cdot, \cdot}_V$ to denote the inner product 
\algn{ \label{eq:V-inner}
\LRa{\omega, \omega'}_V := \LRa{\omega, \omega'} + \LRa{d^k \omega, d^k \omega'}, \qquad \omega, \omega' \in V^k
}
and $\| \cdot \|_V$ is the associated norm. 
The dual complex $(W, d^*)$, associated to $(W,d)$, is the Hilbert complex with same $W^k$ as Hilbert spaces and $d_{k+1}^* : W^{k+1} \ra W^k$, the adjoint of $d^k$, as differential. The $d^*$ is also called the coderivative of $d$. We say that $d^k$ is closed and densely-defined if the range of $d^k$ is closed in $W^{k+1}$ and the domain of $d^k$ is dense in $W^k$. Furthermore, the abstract space corresponding to the harmonic forms is given by 
\[
\mfh^k = \{ \omega \in V^k \;:\; d \omega = 0 \text{ and }\LRa{\omega, d \eta} = 0 \text{ for all }\eta \in V^{k-1} \}.
\]
In an abstract Hilbert complex, for a given $f \in W^k$ with $f \perp \mfh^k$, a variational mixed form of the Hodge Laplace problem \eqref{hodge-Lap-strong} is to find $(\sigma, u, p) \in V^{k-1} \times V^k \times \mfh^k$ such that 
\begin{align}
\notag \LRa{\sigma, \tau} - \LRa{d \tau, u}  &= 0 , & &   \tau \in V^{k-1}, \\
\label{eq:cont-eq} \LRa{d\sigma, v} + \LRa{du, dv} + \LRa{p, v} &= \LRa{f, v}, & &   v \in V^k, \\
\notag \LRa{u, q} &= 0, & &   q \in \mfh^k .
\end{align}
To discretize \eqref{eq:cont-eq}, we assume that $(V_h, d)$ is a family of finite dimensional subcomplexes 
parametrized by a discretization parameter $h \in (0,1]$. So 
$V_h^k \subset V^k$ and $d(V_h^{k-1}) \subset V_h^k$. Furthermore, we assume that the discretization 
is stable in the sense that there exist uniformly bounded cochain projections, cf. 
\cite[Section 3.3]{AFW10}. 
As a consequence, if we define
\[
\mfz_h^k = \{ \omega \in V_h^{k} \; : \; d \omega = 0 \}, \quad \text{and } 
\mfh_h^k = \{ \omega \in V_h^k \;:\; d \omega = 0, \; \LRa{\omega, d \tau} = 0 \quad    \tau \in V_h^{k-1} \},
\] 
then a discrete Poincar\'{e} inequality holds, i.e., there exists $c_P>0$, independent of $h$,  such that 
\algn{ \label{eq:disc-poincare}
\| v \| \leq c_P \| d v \|, \qquad v \in \mfz_h^{k \perp}.
}
Here $\mfz_h^{k \perp}$ is the orthogonal complement of $\mfz_h^k$ in $V_h^k$, cf.  \cite{AFW10}. 

Let $\mc{X}_h^k = V_h^{k-1} \times V_h^{k} \times \mfh_h^k$. The discrete problem corresponding to \eqref{eq:cont-eq} 
is to find $(\tilde{\sigma}_h, \tilde{u}_h, \tilde{p}_h) \in \mc{X}_h^k$ such that a system of the form 
\eqref{hodge-mixed-1-h} holds. Alternatively, this system can be written 
\algn{
\label{eq:cont-B-eq} \mathcal{B} (\tilde{\sigma}_h, \tilde{u}_h, \tilde{p}_h; \tau, v, q) &= \LRa{f, v}, & &   
(\tau, v, q) \in \mc{X}_h^k, 
}
where the bilinear form is give by
\algn{ \label{eq:B}
\mathcal{B} (\tau, v, q; \tau', v', q') = \LRa{\tau, \tau'} - \LRa{d \tau', v} + \LRa{d \tau, v'} + \LRa{dv, dv'} + \LRa{q, v'} - \LRa{q', v}.
}
If we let $\| (\tau, v, q) \|_{\mc{X}} := \| \tau \|_{V} + \| v \|_{V} + \| q \|$,
then stability and error estimates for the discrete approximations  are given in
\cite[Theorem~3.9]{AFW10}. An estimate of the form 
\algn{ \label{eq:tilde-approx}
\| (\sigma, u, p) - (\tilde{\sigma}_h, \tilde{u}_h, \tilde{p}_h) \|_{\mc{X}} \quad \lesssim \inf_{(\tau,v,q) \in \mc{X}_h^k} \| (\sigma, u, p) - (\tau, v, q) \|_{\mc{X}} + \mc{E}_h(u) 
}
holds.
Here, and below, the notation  $X\lesssim Y$ is used to state that  $X \leq C Y$, with a   
constant $C>0$ independent of the discretization parameter $h$. The extra error term 
$\mc{E}_h(u)$ appears as a consequence of the fact that the space of discrete harmonic forms, $\mfh_h^k$,
is not a subspace of $\mfh^k$. In fact, this term will be zero if there are no nontrivial harmonic forms. 

To achieve a numerical method which results in a local $d_h^*$ operator in the concrete settings below,
we will consider discrete problems with a perturbed bilinear form $\mathcal{B}_h$. 
More precisely, the first equation of \eqref{hodge-mixed-1-h} is modified.  To define $\mc{B}_h$, we suppose that there is a bilinear form $\LRa{\cdot, \cdot}_h$ satisfying the following assumption:
\begin{itemize}
 \item[\bf (A)] $\LRa{\cdot, \cdot}_h$ is a symmetric bounded coercive bilinear form on $V_h^{k-1} \times V_h^{k-1}$ such that the norm $\| \tau \|_h:= \LRa{\tau , \tau}_h^{1/2}$ is equivalent to $\| \tau \|$ for $\tau \in V_h^{k-1}$ with constants independent of $h$. 
\end{itemize}
Explicit examples of $\LRa{\cdot,\cdot}_h$ on simplicial and cubical meshes, leading to a
local $d_h^*$ operator,  will be given in the next two sections.
We now define $\mathcal{B}_h :\mc{X}_h^k \times \mc{X}_h^k \ra \R$ by 
\algn{ \label{eq:Bh}
\mc{B}_h (\tau, v, q ; \tau', v', q')  = \mc{B}(\tau, v, q ; \tau', v', q') + 
\LRa{\tau, \tau'}_h - \LRa{\tau, \tau'}
}
and consider the problem to find $(\sigma_h, u_h, p_h) \in \mc{X}_h^k$ such that 
\algn{ \label{eq:Bh-eq}
\mc{B}_h (\sigma_h, u_h, p_h ; \tau, v, q) = \LRa{f, v}, \qquad   (\tau, v, q) \in \mc{X}_h^k .
}
Let us define the norm $\tnorm{(\tau, v, q )}$ for $(\tau, v, q) \in \mc{X}_h^k$ by
\algns{
\tnorm{(\tau, v, q )} := \LRp{ \| \tau \|_{h}^2 + \| d \tau \|^2 + \| v \|_{V}^2 + \| q \|^2 }^{\half} . 
}
From the assumption {\bf (A)} it is easy to see that $\tnorm{\cdot}$ is equivalent to $\| \cdot \|_{\mc{X}}$ 
in $\mc{X}_h^k$. Due to {\bf (A)} and the discrete Poincar\'{e} inequality \eqref{eq:disc-poincare}, there exists a positive constant, again denoted by ${c}_P$, that 
\begin{align} \label{eq:poincare}
\| \rho \|_{h}  \leq {c}_P \| d \rho \|, \quad \rho \in \mfz_h^{k-1 \perp}. 
\end{align}
\begin{theorem} \label{thm:stability} Suppose that $\mathcal{B}_h$ is defined as in \eqref{eq:Bh} with $\LRa{\cdot, \cdot}_h$ satisfying {\bf (A)}. Then $\mathcal{B}_h$ satisfies
\algn{ \label{eq:Bh-infsup}
\inf_{0 \not = (\tau, v, q) \in \mc{X}_h^k } \sup_{0 \not = (\tau', v', q') \in \mc{X}_h^k } \frac{ \mathcal{B}_h (\tau, v, q ; \tau', v', q')}{\tnorm{(\tau, v, q )} \tnorm{( \tau', v', q' )}} \gtrsim 1 .
}
\end{theorem}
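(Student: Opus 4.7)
The plan is to establish the inf--sup condition by the standard ``construction of a test tuple'' technique that is used for the unperturbed mixed method in \cite[Thm.~3.9]{AFW10}, adapted to account for the replacement of $\langle \cdot,\cdot\rangle$ by $\langle \cdot,\cdot\rangle_h$ on $V_h^{k-1}$. Given $(\tau,v,q)\in\mc{X}_h^k$, I would build $(\tau',v',q')\in\mc{X}_h^k$ by adding four small perturbations to the obvious diagonal choice, so that $\mc{B}_h(\tau,v,q;\tau',v',q')\gtrsim \tnorm{(\tau,v,q)}^2$ and $\tnorm{(\tau',v',q')}\lesssim \tnorm{(\tau,v,q)}$.

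First I would record the discrete $L^2$--orthogonal Hodge decomposition
$v = v_{\mfh} + v_{\mfz} + v_{\mfz^\perp}$ in $V_h^k$, with $v_{\mfh}\in\mfh_h^k$, $v_{\mfz}\in d(V_h^{k-1})$, and $v_{\mfz^\perp}\in\mfz_h^{k,\perp}$. Since the discrete complex is a subcomplex and $d$ restricted to $\mfz_h^{k-1,\perp}$ is a bijection onto $d(V_h^{k-1})$, there exists a unique $\rho \in \mfz_h^{k-1,\perp}$ with $d\rho = v_{\mfz}$, and the Poincar\'e inequality \eqref{eq:poincare} gives $\|\rho\|_h \le c_P\|v_{\mfz}\|$. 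The test tuple I would choose is
\[
(\tau',v',q') = \bigl(\tau - \alpha\rho,\; v + \beta\, d\tau + \gamma\, q,\; q - \delta\, v_{\mfh}\bigr),
\]
with positive constants $\alpha,\beta,\gamma,\delta$ to be fixed later. Note $d\tau\in V_h^k$ and $q,v_{\mfh}\in\mfh_h^k$, so this is an admissible element of $\mc{X}_h^k$.

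Next I would expand $\mc{B}_h(\tau,v,q;\tau',v',q')$ using \eqref{eq:Bh} and \eqref{eq:B}. Most cross terms collapse: $dd\tau=0$, $dq=0$, and $\langle q,d\tau\rangle=\langle d\tau,q\rangle=0$ because $q\in\mfh_h^k$; the mixed $\langle d\tau,v\rangle$ terms telescope with their negatives; and $\langle d\rho,v\rangle=\langle v_{\mfz},v\rangle=\|v_{\mfz}\|^2$, $\langle v_{\mfh},v\rangle=\|v_{\mfh}\|^2$. The surviving identity is
\[
\mc{B}_h(\tau,v,q;\tau',v',q') = \|\tau\|_h^2 - \alpha\langle\tau,\rho\rangle_h + \alpha\|v_{\mfz}\|^2 + \beta\|d\tau\|^2 + \|dv\|^2 + \gamma\|q\|^2 + \delta\|v_{\mfh}\|^2 .
\]

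The single term that requires care is $\alpha\langle\tau,\rho\rangle_h$; this is the main obstacle, and it is exactly the place where assumption \textbf{(A)} is used. Cauchy--Schwarz in $\langle\cdot,\cdot\rangle_h$, followed by Young's inequality and the discrete Poincar\'e bound $\|\rho\|_h\le c_P\|v_{\mfz}\|$, yields $|\alpha\langle\tau,\rho\rangle_h|\le \tfrac12\|\tau\|_h^2 + \tfrac{\alpha^2 c_P^2}{2}\|v_{\mfz}\|^2$. Choosing $\alpha = 1/c_P^2$ absorbs the $v_{\mfz}$ part and leaves a positive coefficient $1/(2c_P^2)$ in front of $\|v_{\mfz}\|^2$. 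Fixing any $\beta,\gamma,\delta>0$ (e.g.\ all equal to $1$) and using the discrete Poincar\'e inequality $\|v_{\mfz^\perp}\|\le c_P\|dv\|$ to recover the $\|v_{\mfz^\perp}\|^2$ part of $\|v\|^2$, I then get $\mc{B}_h(\tau,v,q;\tau',v',q')\gtrsim \|\tau\|_h^2 + \|d\tau\|^2 + \|v\|^2 + \|dv\|^2 + \|q\|^2 = \tnorm{(\tau,v,q)}^2$. Finally, the triangle inequality together with $\|\rho\|_h\lesssim\|v\|$, $\|d\rho\|=\|v_\mfz\|\le\|v\|$ and $\|v_\mfh\|\le\|v\|$ gives $\tnorm{(\tau',v',q')}\lesssim \tnorm{(\tau,v,q)}$, which yields \eqref{eq:Bh-infsup}. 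The norm equivalence in \textbf{(A)} is used one last time here to pass between $\|\cdot\|$ and $\|\cdot\|_h$ on the $\tau'$ component.
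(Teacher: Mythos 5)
Your proof is correct, and it is precisely the argument the paper has in mind: the authors omit the proof, stating only that it is ``completely analogous to the proof of Theorem~3.2 in \cite{AFW10}, using the discrete Poincar\'e inequality \eqref{eq:poincare},'' and your test-tuple construction with the discrete Hodge decomposition, the lift $\rho$ of $v_{\mfz}$, and the bound $\|\rho\|_h \le c_P\|d\rho\|$ is exactly that adaptation. The algebraic identity for $\mc{B}_h(\tau,v,q;\tau',v',q')$ and the subsequent choice of constants check out, so nothing further is needed.
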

The proof of this theorem is completely analogous to the proof of Theorem~3.2 in \cite{AFW10},
using  the discrete Poincar\'e inequality \eqref{eq:poincare}, so we do not prove it here.

By Theorem~\ref{thm:stability}, \eqref{eq:Bh-eq} has a unique solution $(\sigma_h, u_h, p_h) \in \mc{X}_h^k$, and 
to show the convergence of the perturbed method 
we only need to estimate $\|(\sigma_h - \tilde{\sigma}_h, u_h - \tilde{u}_h, p_h - \tilde{p}_h)\|_{\mc{X}}$. 
%
The equivalence of $\tnorm{\cdot}$ and $\| \cdot \|_{\mc{X}}$, \eqref{eq:Bh-eq} and \eqref{eq:Bh-infsup},  lead us to 
\algn{ 
\notag &\| (\sigma_h - \tilde{\sigma}_h, u_h - \tilde{u}_h, p_h - \tilde{p}_h) \|_{\mc{X}} \\
\notag &\quad \lesssim \sup_{(\tau, v, q) \in \mc{X}_h^k}  \frac{\mathcal{B}_h (\sigma_h - \tilde{\sigma}_h, u_h - \tilde{u}_h, p_h - \tilde{p}_h ; \tau, v, q)}{\tnorm{ (\tau, v, q)}}  \\
\label{eq:strang} &\quad = \sup_{(\tau, v, q) \in \mc{X}_h^k} \frac{\LRa{f, v} - \mathcal{B}_h (\tilde{\sigma}_h, \tilde{u}_h, \tilde{p}_h ; \tau, v, q)}{\tnorm{ (\tau, v, q)}} \\
\notag &\quad = \sup_{\tau \in V_h^{k-1}} \frac{\LRa{\tilde{\sigma}_h, \tau}_h - \LRa{\tilde{\sigma}_h, \tau}}{\| \tau \|_V} .
}

This shows that convergence of $\| (\sigma_h - \tilde{\sigma}_h, u_h - \tilde{u}_h, p_h - \tilde{p}_h) \|_{\mc{X}}$ is related to the consistency error from the discrete bilinear form $\LRa{\cdot, \cdot}_h$. To have a consistency error estimate, we need another assumption for $\LRa{\cdot, \cdot}_h$:
\begin{itemize}
\item[{\bf (B)}] There exist discrete subspaces $W_h^{k-1} \subset W^{k-1}$ and $\tilde{V}_h^{k-1} \subset V_h^{k-1}$ such that 
\algn{ \label{eq:B-cond}
\LRa{\tau, \tau_0} = \LRa{ \tau, \tau_0}_h, \qquad \tau \in \tilde{V}_h^{k-1}, \tau_0 \in W_h^{k-1},
}
and a linear map $\Pi_h : V_h^{k-1} \ra \tilde{V}_h^{k-1}$ such that $ d \Pi_h \tau = d \tau $, $\| \Pi_h \tau \| \lesssim \| \tau \|$, and 
\algn{ \label{eq:B-cond2}
  \LRa{ \Pi_h \tau, \tau_0 }_h =        \LRa{ \tau , \tau_0}_h, \quad  \tau_0 \in W_h^{k-1}. 
}
\end{itemize}
Note that if \eqref{eq:B-cond} holds with $\tilde{V}_h^{k-1}=V_h^{k-1}$, then all other conditions are satisfied with $\Pi_h$ as the identity map. 

The error bound of $\| (\sigma - \sigma_h, u - u_h, p - p_h) \|_{\mc{X}}$ follows from the estimate of $\| (\sigma_h - \tilde{\sigma}_h, u_h - \tilde{u}_h, p_h - \tilde{p}_h) \|_{\mc{X}}$ obtained in the theorem below. 
\begin{theorem} \label{thm:main} Suppose that $\mc{B}_h$ is given as in \eqref{eq:Bh} with $\LRa{\cdot, \cdot}_h$ satisfying conditions {\rm \bf (A)} and {\bf(B)}. 
Then the solution of \eqref{eq:Bh-eq},  $(\sigma_h, u_h, p_h)$, satisfies
\algn{ \label{eq:err-estm}
\| (\sigma_h - \tilde{\sigma}_h, u_h - \tilde{u}_h, p_h - \tilde{p}_h) \|_{\mc{X}} \lesssim \| \sigma - P_{W_h} \sigma \| + \| \sigma - \tilde{\sigma}_h \|,
}
where $P_{W_h}$ is the $W$-orthogonal projection onto $W_h^{k-1}$.
\end{theorem}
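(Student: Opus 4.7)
The plan is to take \eqref{eq:strang} as the starting point: it already reduces the whole estimate to a consistency-error bound of the form
\[
\| (\sigma_h - \tilde{\sigma}_h, u_h - \tilde{u}_h, p_h - \tilde{p}_h) \|_{\mc{X}} \lesssim \sup_{0 \neq \tau \in V_h^{k-1}} \frac{\LRa{\tilde{\sigma}_h,\tau}_h - \LRa{\tilde{\sigma}_h,\tau}}{\|\tau\|_V}.
\]
So the whole task is to show that the numerator is bounded by $(\|\sigma-\tilde{\sigma}_h\|+\|\sigma-P_{W_h}\sigma\|)\|\tau\|$, after which $\|\tau\|\leq\|\tau\|_V$ finishes the proof.

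The natural decomposition is $\tilde{\sigma}_h = P_{W_h}\sigma + (\tilde{\sigma}_h - P_{W_h}\sigma)$, splitting the consistency numerator into two pieces. The first piece, $\LRa{\tilde{\sigma}_h - P_{W_h}\sigma,\tau}_h - \LRa{\tilde{\sigma}_h - P_{W_h}\sigma,\tau}$, is handled purely by assumption (A): the boundedness of $\LRa{\cdot,\cdot}_h$ together with the equivalence $\|\cdot\|_h\sim\|\cdot\|$ bounds each summand by a constant times $\|\tilde{\sigma}_h-P_{W_h}\sigma\|\,\|\tau\|$, and a triangle inequality through $\sigma$ yields the two error terms on the right of \eqref{eq:err-estm}.

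The heart of the argument is the second piece, $\LRa{P_{W_h}\sigma,\tau}_h - \LRa{P_{W_h}\sigma,\tau}$. Here I invoke the operator $\Pi_h : V_h^{k-1}\to\tilde V_h^{k-1}$ from (B). Because $P_{W_h}\sigma\in W_h^{k-1}$, property \eqref{eq:B-cond2} lets me replace $\tau$ with $\Pi_h\tau$ in the discrete pairing, and then \eqref{eq:B-cond} (applicable since $\Pi_h\tau\in\tilde V_h^{k-1}$) converts the discrete pairing into the true inner product, giving $\LRa{\tau,P_{W_h}\sigma}_h = \LRa{\Pi_h\tau,P_{W_h}\sigma}$. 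Subtracting produces
\[
\LRa{P_{W_h}\sigma,\tau}_h - \LRa{P_{W_h}\sigma,\tau} = \LRa{\Pi_h\tau-\tau,\,P_{W_h}\sigma}.
\]
To surface the approximation error $\|\sigma-P_{W_h}\sigma\|$ I add and subtract $\sigma$: the summand $\LRa{\Pi_h\tau-\tau,P_{W_h}\sigma-\sigma}$ is controlled by $\|\Pi_h\tau-\tau\|\,\|\sigma-P_{W_h}\sigma\|\lesssim\|\tau\|\,\|\sigma-P_{W_h}\sigma\|$ using the stability bound $\|\Pi_h\tau\|\lesssim\|\tau\|$. The remaining summand $\LRa{\Pi_h\tau-\tau,\sigma}$ vanishes by testing the first equation of the continuous system \eqref{eq:cont-eq} against $\eta:=\Pi_h\tau-\tau\in V^{k-1}$, since then $\LRa{\sigma,\eta}=\LRa{d\eta,u}=0$ as $d\eta=d\Pi_h\tau-d\tau=0$ by the commuting property of $\Pi_h$.

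The step I expect to be the main obstacle is orchestrating all three components of hypothesis (B) simultaneously: \eqref{eq:B-cond2} to swap the discrete test function, \eqref{eq:B-cond} to return to the exact $L^2$-pairing on $\tilde V_h^{k-1}\times W_h^{k-1}$, and the commuting identity $d\Pi_h\tau=d\tau$ paired with the continuous weak equation for $\sigma=d^*u$ to kill the residual piece. Finding this precise sequence is the only nonroutine element; once it is in hand, everything else is Cauchy--Schwarz and triangle-inequality bookkeeping.
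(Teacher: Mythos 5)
Your proof is correct, and it reaches the bound \eqref{eq:err-estm} by a route that is recognizably different from the paper's in one respect. The paper first uses the \emph{discrete} variational equation (the first line of \eqref{hodge-mixed-1-h} with test function $\tau-\Pi_h\tau$, whose exterior derivative vanishes) to replace $\LRa{\tilde\sigma_h,\tau}$ by $\LRa{\tilde\sigma_h,\Pi_h\tau}$, and only then inserts $P_{W_h}\sigma$ via the exactness identities \eqref{eq:B-cond} and \eqref{eq:B-cond2}; the whole manipulation stays at the level of $\tilde\sigma_h$. You instead split $\tilde\sigma_h=P_{W_h}\sigma+(\tilde\sigma_h-P_{W_h}\sigma)$ at the outset, dispose of the remainder by assumption {\bf (A)} alone, reduce the $P_{W_h}\sigma$ piece to $\LRa{P_{W_h}\sigma,\Pi_h\tau-\tau}$ using \eqref{eq:B-cond2} and \eqref{eq:B-cond}, and kill the residual term $\LRa{\sigma,\Pi_h\tau-\tau}$ with the \emph{continuous} equation \eqref{eq:cont-eq} (legitimate, since $\Pi_h\tau-\tau\in V_h^{k-1}\subset V^{k-1}$ and $d(\Pi_h\tau-\tau)=0$). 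The two arguments use the same ingredients --- \eqref{eq:strang}, the commuting property $d\Pi_h=d$, both identities of {\bf (B)}, and the boundedness in {\bf (A)} --- and yield the same constant-level estimate; yours makes the orthogonality of $\sigma$ to $\ker d\cap V_h^{k-1}$ explicit, while the paper's uses the corresponding discrete orthogonality of $\tilde\sigma_h$ and never invokes the continuous weak formulation. One shared caveat, which is present in the paper as well and so is not a gap on your part: both arguments evaluate $\LRa{\cdot,\cdot}_h$ with $P_{W_h}\sigma\in W_h^{k-1}$ in one slot, which implicitly assumes the discrete bilinear form and its boundedness extend from $V_h^{k-1}\times V_h^{k-1}$ to pairings against $W_h^{k-1}$; this holds for the vertex-quadrature forms used in Sections 4 and 5.
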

\begin{proof} 
By \eqref{eq:strang}, it suffices to show 
\algn{\label{eq:D-estm}
| \LRa{ \tilde{\sigma}_h , \tau} - \LRa{ \tilde{\sigma}_h , \tau}_h | \;\lesssim \LRp{ \| \sigma - P_{W_h} \sigma \| + \| \sigma - \tilde{\sigma}_h \| } \| \tau \|_V .
}
We first observe that since $d \Pi_h \tau' = d \tau' $ we have
$\LRa{ \tilde{\sigma}_h, \tau'} = \LRa{\tilde{\sigma}_h, \Pi_h \tau'}$ for $\tau' \in V_h^{k-1}$.  Using this equality, we have 
\algn{ 
\notag \LRa{ \tilde{\sigma}_h , \tau} - \LRa{ \tilde{\sigma}_h , \tau}_h &= \LRa{ \tilde{\sigma}_h , \Pi_h \tau} - \LRa{ \tilde{\sigma}_h , \tau}_h \\
\label{eq:D-id} &= \LRa{ \tilde{\sigma}_h , \Pi_h \tau} -  \LRa{ \tilde{\sigma}_h , \Pi_h \tau}_h + \LRa{ \tilde{\sigma}_h , \Pi_h \tau - \tau}_h \\
\notag &= \LRa{ \tilde{\sigma}_h - P_{W_h} {\sigma}, \Pi_h \tau} -  \LRa{ \tilde{\sigma}_h - P_{W_h} {\sigma} , \Pi_h \tau}_h \\
\notag &\quad + \LRa{ \tilde{\sigma}_h - P_{W_h} {\sigma} , \Pi_h \tau - \tau}_h , 
}
where, to get the last equality, we used \eqref{eq:B-cond} for the first two terms and \eqref{eq:B-cond2} for the last term, respectively.
By the Cauchy--Schwarz inequality and the boundedness of $\LRa{\cdot, \cdot}_h$ in {\bf (A)}, we have 
\algn{ \label{eq:D-subestm1}
| \LRa{ \tilde{\sigma}_h - P_{W_h} {\sigma}, \Pi_h \tau} -  \LRa{ \tilde{\sigma}_h - P_{W_h} {\sigma} , \Pi_h \tau}_h | &\lesssim (\| \tilde{\sigma}_h - \sigma \| + \| \sigma - P_{W_h}\sigma \|) \| \tau \| .
}
 
A similar argument with $\| \Pi \tau - \tau \| \lesssim \| \tau \|$ from the boundedness of $\Pi_h$, gives 
\algn{ \label{eq:D-subestm2}
| \LRa{ \tilde{\sigma}_h - P_{W_h} {\sigma} , \Pi_h \tau - \tau}_h | &\lesssim \| \tilde{\sigma}_h - P_{W_h} {\sigma} \| \| \Pi_h \tau - \tau \| \\
\notag &\lesssim (\| \tilde{\sigma}_h - \sigma \| + \| \sigma - P_{W_h} {\sigma} \|) \| \tau \| .  
}
Then, \eqref{eq:D-estm} follows from \eqref{eq:D-id}, the triangle inequality, and the estimates \eqref{eq:D-subestm1} and \eqref{eq:D-subestm2}. This completes the proof.
\end{proof}

In summary, we have presented perturbation results for mixed approximations of
abstract Hodge Laplace problems with sufficient conditions for well-posedness and error estimates. 
If the method is based on a standard  mixed formulation of the form \eqref{hodge-mixed-1-h}, which is stable, then the extra error 
introduced by the modification of the bilinear form $\mc{B}$ into $\mc{B}_h$, cf. \eqref{eq:B} and \eqref{eq:Bh}, is controlled by 
Theorem~\ref{thm:main} above.
Hence, the extra conditions to check are conditions  {\rm \bf (A)} and {\bf(B)}. 

\section{The simplicial case}

In this section we apply the abstract framework in the previous section to mixed Hodge Laplace problems of the de Rham complex on simplicial meshes. We let $\Omega$ be a bounded polyhedral domain in $\R^n$. Recall that the de Rham complex on $\Omega$ is the Hilbert complex $(W,d)$ with $W^k= L^2 \Lm^k(\Omega)$, $0 \leq k \leq n$, and with corresponding domain complex $(V,d)$, where  $V^k= H \Lm^k(\Omega)$. Here, 
$d= d^k : H \Lm^k(\Omega) \ra L^2 \Lm^{k+1}(\Omega)$ is the exterior derivative. 
 Let $\{ \mc{T}_h\}$ be a family of shape-regular simplicial meshes of $\Omega$, indexed by the parameter $h = \max \{ \diam T \: : \: T \in \mc{T}_h \, \}.$
Associated to the mesh $\mc{T}_h$ there are basically two families of finite element spaces of  differential forms, $\mc{P}_r \Lm^k(\mc{T}_h)$ and 
$\mc{P}_r^- \Lm^k(\mc{T}_h)$ for $0 \leq k \leq n$, where $r$ is the local polynomial degree.

In our discussion below  we will study concrete realizations of discretizations of the form \eqref{eq:Bh-eq}, where the discrete spaces 
$V_h^{k-1}$ and $ V_h^k$ are chosen as $\mc{P}_1 \Lm^{k-1} (\mc{T}_h)$ and 
$\mc{P}_1^- \Lm^k (\mc{T}_h)$, respectively.
In other words, we are combining  the lowest order finite element spaces of the two basic families. The exterior derivative $d$ maps 
$\mc{P}_1 \Lm^{k-1} (\mc{T}_h)$ into $\mc{P}_1^- \Lm^k (\mc{T}_h)$.
In fact, this pair leads to a stable discretization of the corresponding Hodge Laplace problem in the form of \eqref{hodge-mixed-1-h}. Furthermore, 
the right-hand side of \eqref{eq:tilde-approx} is of order $O(h)$ under the assumption that the solution is sufficiently regular, cf.  \cite[Section 7.6]{AFW06} or \cite[Chapter 5]{AFW10}. The corresponding discrete coderivative $d_h^*$, defined by \eqref{dh*}, is  a map from
$\mc{P}_1^- \Lm^k (\mc{T}_h)$ to $\mc{P}_1 \Lm^{k-1} (\mc{T}_h)$.  However, as discussed  in Sections 1 and 2 above, this operator will be nonlocal 
as a consequence of the continuity properties of the space $\mc{P}_1 \Lm^{k-1} (\mc{T}_h)$. Therefore, 
to achieve a local $d_h^*$ operator we will follow the approach of Section 3 above, and modify the inner product 
on $\mc{P}_1 \Lm^{k-1} (\mc{T}_h)$, cf. \eqref{eq:Bh}. More precisely, we will replace the $L^2$ inner product $\LRa{\cdot, \cdot}$ on 
$\mc{P}_1 \Lm^{k-1} (\mc{T}_h)$ by a modified inner product $\LRa{\cdot, \cdot}_h$. The main purpose of this 
modification is to obtain a local coderivative $d_h^*$,  mapping $\mc{P}_1^- \Lm^k (\mc{T}_h)$ to $\mc{P}_1 \Lm^{k-1} (\mc{T}_h)$, 
defined by 
\begin{equation}\label{dh*mod}
\LRa{ d_h^* u, \tau}_h = \LRa{u, d\tau}, \quad u \in \mc{P}_1^- \Lm^k (\mc{T}_h), \, \tau \in \mc{P}_1 \Lm^{k-1} (\mc{T}_h).
\end{equation}
To apply the convergence theory of Section 3 we need to verify that the stability condition  {\bf (A)} and 
the consistency condition {\bf (B)} hold. In the present case we will verify condition  {\bf (B)} with the space 
$\tilde{V}_h^{k-1}$ taken to be $V_h^{k-1}$. Hence, to verify this condition we only need to show that condition 
\eqref{eq:B-cond} holds for a proper space  $W_h^{k-1} \subset L^2\Lm^{k-1}(\Omega)$. Furthermore, to preserve the linear convergence of the method
the space $W_h^{k-1}$ should have the property that the $L^2$ error of the orthogonal projection  onto $W_h^{k-1}$ 
is of order $O(h)$, cf. Theorem~\ref{thm:main}. In fact, throughout the discussion of this section we will take 
$W_h^{k-1}$ to be the space of piecewise constant forms, i.e., 
\begin{equation}\label{eq:Wh-simplicial}
W_h^{k-1} := \{ \tau \in L^2 \Lm^{k-1} (\Omega)\, | \,  \tau|_T \in \mc{P}_0 \Lm^{k-1} (T) \text{ for all } T \in \mc{T}_h \},
\end{equation}
and as a consequence the desired accuracy of the projection is achieved.

Instead of discussing how to construct the modified inner product $\LRa{ d_h^* u, \tau}_h$ on one specific space, 
$\mc{P}_1 \Lm^{k -1}(\mc{T}_h)$, we will  consider the construction of such modified inner products on all the spaces 
of the form $\mc{P}_1 \Lm^{k }(\mc{T}_h)$, where $0 \le k \le n$. We recall that 
the space $\mc{P}_r \Lm^k (\R^n)$, i.e., the space of polynomial $k$-forms of degree $r$,  consists of all polynomials of degree $r$ with values in $\Alt^k(\R^n)$,
and its dimension is given by 
\algns{
\dim \mc{P}_r \Lm^k (\R^n) = \pmat{n+r \\r} \pmat{n \\k} .
}
Furthermore, 
an element $u$ of $\mc{P}_1 \Lm^k (\R^n)$ is of the form 
\[
u(x) = a_0 + \sum_{j = 1}^n a_j x_j, \qquad a_j \in \Alt^k(\R^n).
\]
Hence, to determine $u$ on a simplex $T \in \mc{T}_h$ we need $(n+1)\pmat{n \\k}$ degrees of freedom. 
The standard degrees of freedom for the space $\mc{P}_1 \Lm^k (\mc{T}_h)$ is given by
\begin{equation}\label{DOFS}
\int_f \tr_f u \wedge v, \qquad v \in \mc{P}_1 \Lm^0 (f), f \in \Delta_k(\mc{T}_h),
\end{equation}
cf. \cite[Theorem 4.10]{AFW06}.
In fact, for any $f \in \Delta_k(\mc{T}_h)$ an element in $\mc{P}_1 \Lm^k (f)$ can be uniquely identified with an element in 
$\mc{P}_1 \Lm^0 (f)$ through the Hodge star operator on $f$. Therefore,
 the degrees of freedom given 
by \eqref{DOFS}, on a fixed $f \in \Delta_k(\mc{T}_h)$, determines the 
$\tr_f u$ uniquely. This means that 
\[ 
\dim \mc{P}_1 \Lm^k (\mc{T}_h) = (k+1) | \Delta_k(\mc{T}_h)|,
\]
where $| \Delta_k(\mc{T}_h)|$ is the cardinality of the set $\Delta_k(\mc{T}_h)$. Furthermore, the degrees of freedom 
given by \eqref{DOFS} can be replaced by any other set of degrees of freedom which determines $\tr_f u$ uniquely on each
$f \in \Delta_k(\mc{T}_h)$.

If $u \in \mc{P}_1 \Lm^k (\mc{T}_h)$ and $f \in \Delta_k(\mc{T}_h)$ then $\tr_f u$ is uniquely determined by 
$\tr_f u$ evaluated at each vertex of $f$.
In particular, if $f$ has vertices $x_0,x_1, \ldots ,x_k$, i.e., $f = [x_0,x_1, \ldots x_k]$, then $\tr_f u$ at vertex  $x_i$ 
is determined by the functional $\phi_{f,x_i}(u)$ given by 
\[
\phi_{f,x_i}(u) = u_{x_i} (x_0- x_i, \ldots, x_{i-1} - x_i, x_{i+1} - x_i, \ldots, x_k- x_i).
\]
In other words, at the point $x_i$ we apply the $k$-form $u$  to the $k$ vectors $x_j- x_i$, $j \neq i$, 
which all are tangential to $f$. By letting the index $i$ run from $0$ to $k$ we obtain $k+1$ functionals which determine $\tr_f u$ uniquely.
Furthermore, an element $u \in \mc{P}_1 \Lm^k (\mc{T}_h)$ is uniquely determined by the  degrees of freedom $\{ \phi_{f,x}(u) \}$,
where $(f,x)$ runs over all pairs such that $f \in \Delta_k(\mc{T}_h)$ and $x \in \Delta_0(f)$. Of course, this is again $(k+1) | \Delta_k(\mc{T}_h)|$
linearly independent degrees of freedom.

If $u \in \mc{P}_1 \Lm^k (\mc{T}_h)$ and $x_i$ is a vertex, i.e., $x_i \in \Delta_0(\mc{T}_h)$, then $u$ is not continuous at $x_i$.
In general, $u$ will have a separate value for each $T \in \mc{T}_h$ which touches  $x_i$. However, the value of $u$ at $x_i$,  taken in the simplex $T$,
is uniquely determined by the $\pmat{n \\ k} $ degrees of freedom given by $\phi_{f,x_i}(u)$ for all $f \in \Delta_k(T)$ such that 
$x_i \in \Delta_0(f)$. As a consequence, it follows that all the values of $u$ at $x_i$ are determined by $\phi_{f,x_i}(u)$
for $f \in \Delta_k(\mc{T}_h)$ with $x_i \in \Delta_0(f)$. In particular, if $\phi_{f,x}(u) = 0$ for a fixed $x \in \lap_0(\mc{T}_h)$ and all $f \in \lap_k(\mc{T}_h)$ such that $x \in \lap_0(f)$, then $u=0$ at $x$.

Given a set of degrees of freedom we can also define the corresponding dual basis $\{ \psi_{f,x} \}$ for the space $\mc{P}_1 \Lm^k (\mc{T}_h)$
defined by 
\begin{equation}\label{dual}
\phi_{g,y} (\psi_{f,x}) = \delta_{(f,x),(g,y)}, \qquad f,g \in \Delta_k(\mc{T}_h), \, x \in \Delta_0(f), \, y \in \Delta_0(g),
\end{equation}
with the obvious interpretation that $\delta_{(f,x),(g,y)} = 1$ if $(f,x) = (g,y)$ and zero otherwise. It is clear from the above observation that $\psi_{f,x} = 0$ at all $y \in \lap_0(\mc{T}_h)$ such that $y \not = x$.
In fact, the piecewise linear form $\psi_{f,x}$ has a simple explicit representation in terms of barycentric coordinates.
To see this, for $x_j \in \Delta_0(\mc{T}_h)$ we let $\lambda_j$ be the piecewise linear function determined by $\lambda_j(x_j)= 1$, while
$\lambda_j$ vanish on all other vertices. In other words, $\lambda_j$ corresponds to the barycentric 
coordinate associate the vertex $x_j$ for all $T \in \mc{T}_h$ such that $x_j \in \Delta_0(T)$, and it is extended by zero elsewhere. Note that the corresponding 
1-form, $d\lambda_j$, is piecewise constant
and vanish outside the macroelement $\Omega_{x_j}$. Here we use the notation that for any $f \in \Delta(\mc{T}_h)$, the associated 
macroelement $\Omega_f$ is given by
\[
\Omega_f = \bigcup \{ \, T \, | \, T \in \mc{T}_h, \, f \in \Delta(T) \, \}.
\]
In particular, we note that if $[x_j, x_i] \in \Delta_1(\mc{T}_h)$ then $d\lambda_j(x_j -x_i) = 1$.
On the other hand,  $d\lambda_j(x_l -x_i) = 0$ if $[x_i,x_l] \in \Delta_1(\mc{T}_h)$ with both endpoints different from  $x_j$.

Assume now that $f = [x_0, x_1, \ldots x_k] \in \Delta_k(\mc{T}_h)$. The corresponding functions
$\psi_{f,x_i}$, $i =0,1, \ldots ,k$
are given by 
\[
\psi_{f,x_i} = \lambda_i d\lambda_0 \wedge \cdots \wedge d\lambda_{i-1} \wedge d\lambda_{i+1} \wedge \cdots \wedge d\lambda_k,
\]
where $\wedge$ denotes the wedge product. The functions $\psi_{f,x_i}$ given above are obviously in $\mc{P}_1 \Lm^k (\mc{T}_h)$ and it is straightforward to check that they satisfy 
the conditions \eqref{dual}. Observe also that the basis functions $\psi_{f,x_i}$ have local support. In fact,  $\supp(\psi_{f,x_i}) \subset \Omega_{x_i}$.
The basis $\{ \psi_{f,x} \}$ for the space $\mc{P}_1 \Lm^k (\mc{T}_h)$, just introduced, is related to point values of traces
via the dual relation \eqref{dual}. Furthermore, the modified inner product 
$\LRa{\cdot, \cdot}_{h}$ will also be defined by point values. In fact, 
the modified inner product will be constructed such that the matrix $\LRa{\psi_{f,x_i}, \psi_{g,x_j}}_{h}$
is block diagonal,
and this  is the key property we will use below to show that the constructed 
coderivative $d_h^*$ is local.

To define the modified  inner product $\LRa{\cdot, \cdot}_h$ on $\mc{P}_1 \Lm^k (\mc{T}_h)$ we first recall 
that if $T = [x_0,x_1, \ldots ,x_n] \in \mc{T}_h$, then the identity 
\[
\int_T u(x) \, dx = \frac{ |T|}{n+1} \sum_{i= 0}^n  u(x_i)
\]
holds for all linear and scalar valued functions $u$. Here $|T|$ denotes the volume of $T$. Therefore, the bilinear form $\LRa{\cdot, \cdot}_{h,T}$,
given by 
\begin{equation}\label{quadrature-s}
\LRa{u, v}_{h,T} = \frac{ |T|}{n+1} \sum_{x\in \Delta_0(T)}  \LRa{u_{x}, v_{x}}_{\Alt},
\end{equation}
defines an inner product on $\mc{P}_1\Lm^k(T)$ which is exactly equal to the inner product on $L^2\Lm^k(T)$ for 
$u \in \mc{P}_1\Lm^k(T)$ and $v \in \mc{P}_0\Lm^k(T)$. As a consequence, if we define $\LRa{\cdot, \cdot}_{h}$ by
\begin{equation}\label{mod-inner}
\LRa{u, v}_{h} = \sum_{T \in \mc{T}_h} \LRa{u, v}_{h,T}, \qquad u,v \in \mc{P}_1 \Lm^k (\mc{T}_h),
\end{equation}
then this is an inner product on $\mc{P}_1 \Lm^k (\mc{T}_h)$ which is identical to the standard $L^2$ inner product 
if $u \in \mc{P}_1 \Lm^k (\mc{T}_h)$ and $v \in W_h^k$, cf. \eqref{eq:Wh-simplicial}. 
Furthermore, it follows from standard scaling arguments and shape regularity that the inner product 
$\LRa{\cdot, \cdot}_{h}$ is equivalent to the standard $L^2$ inner product on $\mc{P}_1 \Lm^k (\mc{T}_h)$,
with constants independent of $h$.

We can summarize the discussion so far as follows.

\begin{theorem} \label{thm:P1-conv}
For $1 \le k \le n$ let 
$V_h^{k-1} = \mc{P}_1 \Lm^{k-1} (\mc{T}_h)$ and  $V_h^k = \mc{P}_1^- \Lm^k (\mc{T}_h)$. Furthermore,  let
the bilinear form $\mc{B}_h$ be defined as in \eqref{eq:Bh} with $\LRa{\cdot,\cdot}$ being the appropriate $L^2$ inner products
and the modified inner product $\LRa{\cdot,\cdot}_h$ on $V_h^{k-1}$ defined as in 
\eqref{mod-inner}. Then the stability condition {\bf (A)} and the consistency condition {\bf (B)} holds, where $W_h^{k-1}$ is given as in \eqref{eq:Wh-simplicial}.
\end{theorem}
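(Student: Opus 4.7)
The plan is to verify conditions \textbf{(A)} and \textbf{(B)} separately; both verifications reduce to elementwise statements on a single simplex combined with a standard scaling argument.

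For \textbf{(A)}, symmetry of $\LRa{\cdot,\cdot}_h$ is immediate from \eqref{quadrature-s} and \eqref{mod-inner}. To obtain the uniform norm equivalence, I would work on a fixed reference simplex $\hat T$. The form $\LRa{\cdot,\cdot}_{h,\hat T}$ is positive definite on $\mc{P}_1 \Lm^{k-1}(\hat T)$, because $\LRa{u,u}_{h,\hat T}=0$ forces $u_x = 0$ at every vertex $x$ of $\hat T$, and an affine map on $\hat T$ with values in $\Alt^{k-1}(\R^n)$ is determined by its $n+1$ vertex values (this is a trivial consequence of the unisolvence discussion preceding the theorem). Hence both $\LRa{\cdot,\cdot}_{h,\hat T}$ and the $L^2(\hat T)$ inner product are symmetric positive definite bilinear forms on the finite-dimensional space $\mc{P}_1 \Lm^{k-1}(\hat T)$, and therefore equivalent. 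Pulling back by an affine map to a general $T \in \mc{T}_h$ preserves the equivalence with constants depending only on the shape-regularity of $\{\mc{T}_h\}$, and summing over elements yields the required uniform equivalence of $\|\cdot\|_h$ and $\|\cdot\|$ on $V_h^{k-1}$.

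For \textbf{(B)}, the remark following the condition allows me to take $\tilde V_h^{k-1} = V_h^{k-1}$ and $\Pi_h$ equal to the identity, so the only thing left to check is \eqref{eq:B-cond}. Using \eqref{mod-inner} and the additivity of the $L^2$ inner product over $\mc{T}_h$, this reduces to the single-simplex identity
\[
\int_T \LRa{\tau_x, (\tau_0)_x}_{\Alt}\, dx \;=\; \frac{|T|}{n+1} \sum_{x \in \Delta_0(T)} \LRa{\tau_x, (\tau_0)_x}_{\Alt}
\]
for $\tau \in \mc{P}_1 \Lm^{k-1}(T)$ and $\tau_0 \in \mc{P}_0 \Lm^{k-1}(T)$. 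Since $\tau_0$ is constant in $x$ while $\tau$ is affine in $x$, the scalar integrand $x \mapsto \LRa{\tau_x, (\tau_0)_x}_{\Alt}$ is an affine function on $T$, and the vertex quadrature rule displayed immediately above \eqref{quadrature-s} is exact on such functions. This establishes \eqref{eq:B-cond}.

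No serious obstacle appears: both verifications are essentially bookkeeping on top of the construction already carried out in the text. The one point meriting explicit care is the positive definiteness of $\LRa{\cdot,\cdot}_{h,\hat T}$, which rests on the fact that an element of $\mc{P}_1 \Lm^{k-1}(T)$ with vanishing vertex values is zero — a consequence of the \emph{pointwise} (rather than trace-based) reading of the degrees of freedom $\phi_{f,x}$ developed in the paragraphs preceding the theorem.
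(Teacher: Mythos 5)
Your proposal is correct and follows essentially the same route as the paper: condition \textbf{(A)} via positive definiteness of the vertex quadrature form (vertex values determine an element of $\mc{P}_1\Lm^{k-1}(T)$) together with a standard scaling and shape-regularity argument, and condition \textbf{(B)} via $\tilde V_h^{k-1}=V_h^{k-1}$, $\Pi_h=\mathrm{id}$, and exactness of the vertex rule on affine integrands against piecewise constants. The paper presents this as the discussion preceding the theorem rather than a formal proof, but the content is the same.
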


As we have already observed above the solution $(\sigma_h, u_h,p_h)$ of the problem \eqref{eq:Bh-eq}, 
with the set up given in the theorem above, will in general converge to the corresponding exact solution 
of the Hodge Laplace problem. This is just a consequence of the estimate \eqref{eq:tilde-approx} combined 
with Theorem~\ref{thm:main}. Furthermore, under the appropriate regularity assumptions on the exact solution 
the convergence will be linear with respect to the mesh size $h$, i.e., the error will be $O(h)$.

Next, we will consider the operator $d_h^*$ defined by \eqref{dh*mod}, and show that this operator is indeed a local operator.
This will basically follow from the fact that the matrix $\LRa{\psi_{f,x_i}, \psi_{g,x_j}}_h$ is block diagonal. 

\begin{theorem}\label{local-simplicial}
For $1 \le k \le n$ let 
$d_h^*: \mc{P}_1^- \Lm^k (\mc{T}_h) \to \mc{P}_1 \Lm^{k-1} (\mc{T}_h)$ be the operator defined by \eqref{dh*mod}. This operator is local. More precisely, 
for any vertex $x_i \in \Delta_0(\mc{T}_h)$ the values $(d_h^* u)_{x_i}$ only depends on $u$ restricted to the macroelement $\Omega_{x_i}$.
 \end{theorem}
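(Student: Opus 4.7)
The plan is to exploit the vertex-dual basis $\{\psi_{f,x}\}$ of $\mc{P}_1 \Lm^{k-1}(\mc{T}_h)$ together with the fact that $\LRa{\cdot,\cdot}_h$ is built entirely from vertex-point evaluations. The core intermediate claim I will establish is that the mass matrix
\[
M_{(f,x),(g,y)} := \LRa{\psi_{f,x}, \psi_{g,y}}_h
\]
is block diagonal, with one block for each vertex $x \in \Delta_0(\mc{T}_h)$. Once this is in place, the locality statement will follow by inverting the mass matrix one vertex block at a time.

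To prove the block decomposition, I will use that, by \eqref{mod-inner} and \eqref{quadrature-s}, $\LRa{\psi_{f,x}, \psi_{g,y}}_h$ is a sum over $T \in \mc{T}_h$ and over the vertices $z \in \Delta_0(T)$ of the pointwise pairing $\LRa{(\psi_{f,x})_z, (\psi_{g,y})_z}_{\Alt}$. The excerpt has already observed that $\psi_{f,x}$ vanishes at every vertex $z \neq x$, so a non-zero summand requires $z = x = y$. Consequently $M_{(f,x),(g,y)} = 0$ whenever $x \neq y$, giving the block structure indexed by vertices. Each diagonal block is positive definite by assumption \textbf{(A)}, hence invertible.

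With this structure in hand, expand $d_h^* u = \sum_{(f,x)} c_{f,x} \psi_{f,x}$ and test \eqref{dh*mod} against $\tau = \psi_{g,y}$. The block diagonality decouples the resulting linear system vertex by vertex: for each fixed $x \in \Delta_0(\mc{T}_h)$ the coefficients $\{c_{f,x}\}_{f \ni x}$ are determined by the equations
\[
\sum_{f \ni x} c_{f,x}\, \LRa{\psi_{f,x}, \psi_{g,x}}_h = \LRa{u,\, d\psi_{g,x}}, \qquad g \in \Delta_{k-1}(\mc{T}_h),\ x \in \Delta_0(g).
\]
Since $\supp(\psi_{g,x}) \subset \Omega_x$, and therefore $\supp(d\psi_{g,x}) \subset \Omega_x$, each right-hand side depends only on $u|_{\Omega_x}$. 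Hence the entire vertex block $\{c_{f,x}\}_{f \ni x}$ depends only on $u|_{\Omega_x}$.

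Finally, because $\psi_{f,y}(x_i) = 0$ whenever $y \neq x_i$, the value $(d_h^* u)_{x_i}$ only receives contributions from the basis functions in the block attached to $x_i$, i.e.\ $(d_h^* u)_{x_i} = \sum_{f \ni x_i} c_{f,x_i}\, (\psi_{f,x_i})_{x_i}$, and is therefore determined by $u|_{\Omega_{x_i}}$. I do not expect any serious obstacle; the only step that needs to be done with care is the block-diagonality argument, where one must match the support property of the dual basis with the vertex quadrature so that cross-vertex couplings vanish exactly.
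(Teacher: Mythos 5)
Your proposal is correct and follows essentially the same route as the paper: expand $d_h^*u$ in the dual basis $\{\psi_{f,x}\}$, use the vertex-quadrature definition of $\LRa{\cdot,\cdot}_h$ together with the fact that $\psi_{f,x}$ vanishes at every vertex other than $x$ to get block diagonality of the mass matrix, and then solve the decoupled positive definite vertex blocks, whose right-hand sides depend only on $u|_{\Omega_{x_i}}$ because $\supp(\psi_{g,x_i})\subset\Omega_{x_i}$. The only difference is that you spell out the block-diagonality argument in slightly more detail than the paper, which simply asserts it after having established the vanishing property of the dual basis earlier.
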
  
 
 \begin{proof}
 For any $u \in \mc{P}_1^- \Lm^k (\mc{T}_h)$ we can express $d_h^*u$ in terms of the basis functions $\psi_{f,x_i}$
 in $\mc{P}_1 \Lm^{k-1} (\mc{T}_h)$, i.e.,
 \[
 d_h^*u = \sum_{(f,x_i)} c_{f,x_i} \psi_{f,x_i}, \qquad c_{f,x_i} \in \R,
 \]
where the sum runs over all pairs $(f,x_i)$ such that $f \in \Delta_{k-1}(\mc{T}_h)$ and $x_i \in \Delta_0(f)$.
Furthermore, the matrix $\LRa{\psi_{f,x_i}, \psi_{g,x_j}}_{h}$ is block diagonal, where the blocks are
of the form $\LRa{\psi_{f,x_i}, \psi_{g,x_i}}_{h}$, i.e., they  correspond to the vertices  $x_i$ in  $\Delta_0(\mc{T}_h)$.
Therefore, 
if we fix a vertex $x_i$, then all the coefficients of the form $c_{f,x_i}$ is determined by the subsystem of \eqref{dh*mod}
of the form 
\begin{equation}\label{system-x_0}
\sum_f c_{f,x_i} \LRa{\psi_{f,x_i}, \psi_{g,x_i}}_{h} = \LRa{u, d\psi_{g,x_i}},
\end{equation}
where $f$  and $g$ runs over all elements of $\Delta_{k-1}(\mc{T}_h)$ which contains the vertex $x_i$. However, this represents a square positive definite system which determines
the coefficients $c_{f,x_i}$ uniquely, and hence all the values $(d_h^* u)_{x_i}$. Finally, since the support of the basis functions $\psi_{g,x_i}$ is contained in $\Omega_{x_i}$ it follows that  
the right 
hand side of \eqref{system-x_0} only depends on $u$ restricted to $\Omega_{x_i}$.
\end{proof}
It follows from the proof above that the coefficients $c_{f,x_i}$ can be computed from the local systems \eqref{system-x_0}.  When $x_i$ runs over all the vertices of the mesh, these matrices 
represent the diagonal blocks of the full matrix
$\LRa{\psi_{f,x_i}, \psi_{g,x_j}}_h$. In fact, the elements of the block associated the vertex $x_i$   can be explicitly expressed in terms 
of the volumes of the $n$ simplexes $T$ touching $x_i$, the volumes of $f$ and $g$, and the principal angles between $f$ and $g$. 

To see this, and to have the simplest notation, 
we perform this discussion of the matrix $\LRa{\psi_{f,x_i}, \psi_{g,x_j}}_h$ in the setting of $k$-forms instead of 
$(k-1)$-forms.
We fix a vertex in $\Delta_0(\mc{T}_h)$, and call it $x_0$. To compute the elements of the diagonal block of the matrix 
$\LRa{\psi_{f,x_i}, \psi_{g,x_j}}_h$, associated the vertex $x_0$, we let $f = [x_0, x_1, \cdots ,x_k] \in \Delta_k(\mc{T}_h)$.
If we assume that the vertices are ordered, such that the vectors $x_1 - x_0, \cdots ,x_k - x_0$ are positively oriented, then
\begin{equation}\label{rep-vol-f}
d\lambda_1 \wedge \cdots \wedge d\lambda_k = \frac{1}{k!\, |f|} \vol_f,
\end{equation}
when the forms are restricted to vectors which are tangential to $f$, cf. \cite[Section 4.1]{AFW06}.
Here $\vol_f$ denotes the standard volume form on $f$.
If $f$ and $g$ are two $k$--dimensional simplexes containing $x_0$ we then obtain  that 
\begin{align*}
\LRa{\psi_{f,x_0}, \psi_{g,x_0}}_h &= \sum_T \frac{|T|}{n+1} \LRa{(\psi_{f,x_0})_{x_0}, (\psi_{g,x_0})_{x_0}}_{\Alt}\\
&= \frac{1}{(n+1)(k!)^2} \sum_T \frac{|T|}{|f| \, |g|} \LRa{\vol_{f}, \vol_{g}}_{\Alt},
\end{align*}
where the sum is over all $T \in \mc{T}_h$ such that both $f$ and $g$ are in $\Delta_k(T)$. 
Furthermore, we assume that $\vol_{f}$ has been properly extended to 
a $k$-form
on  $\R^n$ such that \eqref{rep-vol-f} holds for all vectors and for $x \in T$.
However, the inner product $\LRa{\vol_f, \vol_g}_{\Alt}$ is related to the principal angles of the two $k$--dimensional subspaces of $\R^n$ containing 
$f$ and $g$, cf. for example \cite[Theorem~5]{MR1165446}. Therefore,  we have indeed obtained the desired representation of the elements of the matrix $\LRa{\psi_{f,x_j}, \psi_{g,x_j}}_h$.

\section{The cubical case} \label{sec:rect-eg}
In this section we present concrete realizations of the abstract framework studied in Section
\ref{sec-abstract} above
for approximations of the mixed Hodge Laplace problems on cubical meshes. 
Here a  cubical mesh $\mc{T}_h$ on the domain $\Omega$ is a mesh where each 
element is a  Cartesian product of intervals.

Mixed finite element methods with local coderivatives on cubical meshes have been studied by Wheeler and collaborators for the Darcy flow problems in two and three dimensions (i.e., $k=n$ and $n=2,3$), see  
\cite{Ingram-Wheeler-Yotov-2010,Wheeler-Xue-Yotov-2012}. 
In the two dimensional case 
the arguments are rather similar to the simplicial case. By choosing $V_h^{k-1} = V_h^1$ 
as the lowest order  Brezzi--Douglas--Marini space (${\rm BDM}_1$), cf. \cite{BDM85}, and piecewise constant functions for $V_h^k = V_h^2$, combined with an integration rule based on vertex values, a local coderivative $d_h^*$ is obtained.
However, the natural analog of this approach for $n=3$,
where the lowest order Brezzi--Douglas--Duran--Fortin space (${\rm BDDF}_1$, \cite{BDDF})
is chosen for $V_h^{k-1} = V_h^2$ and $V_h^k = V_h^3$ consist of piecewise constants,
will not lead to a corresponding local method. To overcome this problem Wheeler et al. replaced the standard ${\rm BDDF}_1$ space by an enriched space. The discussion in this section can be seen as a generalization of the discussion given in \cite{Ingram-Wheeler-Yotov-2010,Wheeler-Xue-Yotov-2012} to general $k$-forms in any dimension $n$. 
The most natural analogs of the ${\rm BDM}_1$ and ${\rm BDDF}_1$ spaces for the case of differential forms and higher space dimensions are
the $\mc{S}_1 \Lm^k(\mc{T}_h)$ spaces introduced by Arnold and Awanou in \cite{Arnold-Awanou-2014}, 
cf. the discussion given in the introduction of that paper. We will give a brief review of these spaces below.
However,  to 
obtain the  finite element spaces we need to obtain local approximations of the coderivatives, we  will enrich the finite element spaces 
$\mc{S}_1 \Lm^k(\mc{T}_h)$ to obtain a larger spaces,
which we will denote $\mc{S}_1^+ \Lm^k(\mc{T}_h)$.

For our discussion below we introduce some additional  notation.  
Recall the definition of the set $\Sigma(k)$ given in Section~\ref{prelim} above, i.e.,
$\s \in \Sigma(k)$ is an increasing  sequence with values $\s_i$, $1 \le i \le k$, such that 
\[
1 \le \s_1 < \s_2 < \ldots < \s_k \le n.
\]
The set $\Sigma(k)$ has $\pmat{n \\ k}$ elements.
We will use $\llbracket \s \rrbracket$ to denote the range of $\s$, i.e., 
\[
\llbracket \s \rrbracket = \{ \s_1, \s_2, \ldots , \s_k \} \subset \{ 1, 2, \ldots , n \},
\]
and $\s^*$ is the complementary sequence in $\Sigma(n-k)$ such that
\[
\llbracket \s \rrbracket \cup \llbracket \s^* \rrbracket = \{ 1, 2, \ldots , n \}.
\]
For each $\s \in \Sigma(k)$ we define 
$d x_\s = d x_{\s_1} \wedge \cdots \wedge d x_{\s_k}$ and the set $\{ dx_\s \, : \, \s \in \Sigma(k) \, \}$
is a basis of $\Alt^k(\R^n)$.
A differential $k$-form $u$ then admits the representation 
\algns{
u = \sum_{\s \in \Sigma(k)} u_{\s} d x_{\s},
}
where  the coefficients $u_{\s}$ are scalar functions on $\Omega$. Furthermore,  the exterior derivative $du$ 
can be expressed as 
\algns{
du = \sum_{\s \in \Sigma(k)} \sum_{i =1} ^n \pd_i u_{\s} d x_i \wedge d x_{\s} ,
}
if $\pd_i u_{\s}$ is well-defined as a function on $\Omega$. 
The Koszul operator $\kappa : \Alt^k(\R^n) \to \Alt^{k-1}(\R^n)$ is defined by contraction with the vector $x$, i.e.,
$(\kappa u)_x = u_x \lrcorner x$. As a consequence of the alternating property of $u$ it therefore follows that 
$\kappa \circ \kappa = 0$.
It also follows that 
\algns{ 
\kappa (dx_{\s}) = \kappa ( dx_{\s_1} \wedge \cdots \wedge dx_{\s_k} ) = \sum_{i=1}^k (-1)^{i+1} x_{\s_i} dx_{\s_1} \wedge \cdots \wedge \widehat{dx_{\s_i}} \wedge \cdots \wedge dx_{\s_k} ,
}
where $\widehat{dx_{\s_i}}$ means that the term $dx_{\s_i}$ is omitted. This definition is extended to the space of differential $k$-form on $\Omega$ by linearity, i.e., 
\algns{
\kappa u = \kappa { \sum_{\s \in \Sigma(k)} u_{\s} dx_{\s} } = \sum_{\s \in \Sigma(k)} u_{\s} \kappa( dx_{\s} ) .
}
If $f$ is an $(n-1)$-dimensional hyperplane of $\R^n$, obtained by fixing one coordinate, for example 
\[ 
f = \{\, x \in \R^n \, : \, x_1 = c \, \},
\]
then we can define the Koszul operator $\kappa_f$ for forms defined on $f$ by 
$(\kappa_f v)_x = v_x \lrcorner (x - x^f)$, where $x^f = (c,0, \ldots ,0)$. We note that the vector $x- x^f$ is in the tangent space 
of $f$ for $x \in f$.
Since $\tr_f (u \lrcorner (x - x^f)) = \tr_f u \lrcorner (x - x^f)$ for $x \in  f$ and $(\kappa u)_x = u_x \lrcorner (x-x^f) + u_x \lrcorner x^f$, we can conclude that
\begin{equation}\label{kos-tr}
\tr_f \kappa u = \kappa_f \tr_f u + \tr_f (u \lrcorner x^f).
\end{equation}
For a multi-index $\alpha$ of $n$ nonnegative integers, $x^{\alpha} = x_1^{\alpha_1} \cdots x_n^{\alpha_n}$. 
Hence, if $u$ is in $\mc{P}_r\Lm^k$, the space of polynomial $k$-forms of order $r$, then $u$ can be expressed as 
\[
u = \sum_{ \s \in \Sigma(k)}  u_\s dx_\s, \qquad u_\s = \sum_{|\alpha| \le r} c_{\alpha} x^{\alpha} \in \mc{P}_r,
\]
where $|\alpha | = \sum_i \alpha_i$. In other words, the coefficients $u_\s$ are ordinary real valued polynomials of degree less than or equal to $r$.
The corresponding tensor product space, $\mc{Q}_r\Lm^k$, consists of $k$-forms $u$ 
where the coefficients $u_\s$ is a tensor product of polynomials of degree less than or equal to $r$, i.e., 
\[
u_\s = \sum_{\alpha_i \le r, 1 \le i \le n} c_{\alpha} x^{\alpha} \in \mc{Q}_r.
\]
Denoting $\mc{H}_r \Lm^k$ the space of differential $k$-forms with homogeneous polynomial coefficients of degree $r$, we also have  the identity
\algn{ \label{eq:homotopy-formula}
(\kappa d + d \kappa ) u = (r + k) u , \qquad u \in \mc{H}_r \Lm^k ,
}
cf. \cite[Section 3]{AFW06}.

\subsection{The families $\mc{Q}_r^- \Lm^k$ and $\mc{S}_r \Lm^k$}
 Our discussion below relates two of the previously constructed families of finite element spaces 
with respect to cubical meshes, the $\mc{Q}_r^- \Lm^k$-family of \cite{Arnold-Boffi-Bonizzoni-2015}
and
the $\mc{S}_r \Lm^k$-family of \cite{Arnold-Awanou-2014}. Here the parameter $r \ge 1$
is related to the local polynomial degree, and for each $k$, $0 \le k \le n$, the spaces  
$\mc{Q}_r^- \Lm^k(\mc{T}_h)$ and $\mc{S}_r \Lm^k(\mc{T}_h)$ are subspaces of 
$H\Lambda^{k}(\Omega)$. Furthermore, each family is nested, i.e., $\mc{Q}_{r-1}^- \Lm^k(\mc{T}_h) \subset \mc{Q}_r^- \Lm^k(\mc{T}_h)$
and  $ \mc{S}_{r-1} \Lm^k(\mc{T}_h) \subset \mc{S}_r \Lm^k(\mc{T}_h)$.
There are also other families of cubical finite element differential forms proposed in the literature, cf. for example \cite{Christiansen-Gillette-2016,Cockburn-Qiu-2014,Gillette-Kloefkorn-2016}, but these spaces will not be used here.

The families $\mc{Q}_r^- \Lm^k$ and $\mc{S}_r \Lm^k$
lead to subcomplexes of the de Rham complex of the form
\algn{ \label{eq:Sr-Qr-exact}
\begin{split}
&\R \mapover{} \mc{Q}_r^- \Lm^0(\mc{T}_h) \mapover{d^0} \mc{Q}_r^- \Lm^1(\mc{T}_h) \mapover{d^1 } \cdots \mapover{d^{n-1}} \mc{Q}_r^- \Lm^n(\mc{T}_h) \mapover{} 0 ,\\
&\R \mapover{} \mc{S}_r \Lm^0(\mc{T}_h) \mapover{d^0} \mc{S}_{r-1} \Lm^1(\mc{T}_h) \mapover{d^1 } \cdots \mapover{d^{n-1}} \mc{S}_{r-n} \Lm^n(\mc{T}_h) \mapover{} 0 .
\end{split}
}
For a given $k$, $0 \le k \le n$, and a given cubical mesh $\mc{T}_h$ the space $\mc{Q}_1^-\Lm^k(\mc{T}_h)$
is the simplest space in the two families above. In fact, we will see below that for any $r \ge 1$ we also have 
\[
\mc{Q}_1^-\Lm^k(\mc{T}_h) \subset \mc{Q}_r^-\Lm^k(\mc{T}_h), \mc{S}_{r} \Lm^k(\mc{T}_h).
\]
Furthermore, in complete analogy with the Whitney forms, $\mc{P}_1^-\Lm^k(\mc{T}_h)$
 in the case of simplicial meshes, the spaces  $\mc{Q}_1^-\Lm^k(\mc{T}_h)$ has a single degree of freedom associated each subsimplex of dimension
 $k$.  More precisely, the degrees of freedom for an element $u \in \mc{Q}_1^-\Lm^k(\mc{T}_h)$ are given by
 \begin{equation}\label{DOF-Q1}
 \int_f \tr_f u , \quad f \in \lap_k(\mc{T}_h).
 \end{equation}
If $T = I_1 \times I_2 \times \ldots \times I_n \in \mc{T}_h$ and
$u \in \mc{Q}_1^-\Lm^k(\mc{T}_h)$, then $u|_T$ is of the form 
\begin{equation}\label{local-Q-}
u|_T = \sum_{\s \in \Sigma(k)} (\sum_{\alpha_j \le 1 - \delta_{j,\sigma}}  c_{\alpha} x^{\alpha}  ) dx_{\s},
\end{equation}
where $\delta_{j,\sigma} =1$ if $j \in \llbracket \s \rrbracket$ and zero otherwise. 
The local spaces $\mc{Q}_1^-\Lm^k(T)$ has dimension $2^{n-k} \pmat{n \\ k}$, and 
together with degrees of freedom \eqref{DOF-Q1} this defines the space $\mc{Q}_1^-\Lm^k(\mc{T}_h)$. In particular, the space 
$\mc{Q}_1^-\Lm^0(\mc{T}_h) = \mc{Q}_1\Lm^0(\mc{T}_h)$,  while 
$\mc{Q}_1^-\Lm^n(\mc{T}_h) = \mc{P}_0\Lm^n(\mc{T}_h)$, i.e., the space of piecewise constant
$n$-forms. Furthermore, for each $k$ with $ 0 < k < n$, the space $\mc{Q}_1^-\Lm^k(\mc{T}_h)$ is strictly contained in $\mc{Q}_1\Lm^k(\mc{T}_h)$.

In \cite{Arnold-Awanou-2014} the definition of the spaces $\mc{S}_r\Lm^k(\mc{T}_h)$ was based on the concept of linear degree. However, a simple and more explicit charaterization of these spaces can be given when $r=1$ .
By utilizing the definition given in \cite{Arnold-Awanou-2014} in this special case we can derive 
that a function $u$ in the space $\mc{S}_1\Lm^k(\mc{T}_h)$ is locally of the form 
\begin{equation}\label{local-S1}
u|_T = u^- + d \kappa \sum_{\s \in \Sigma(k)} \sum_{i \in \llbracket \s \rrbracket} (\sum_{\alpha_j \le 1 - \delta_{j,\sigma}}  c_{\alpha} x^{\alpha}  ) x_i \, dx_{\s}, \quad T \in \mc{T}_h,
\end{equation}
where $u^- \in \mc{Q}_1^-\Lm^k(T)$. 
The local space $\mc{S}_1\Lm^k(T)$ has dimension $2^{n-k} \pmat{n \\ k} (k+1)$, which should be compared with the fact that $\dim \mc{Q}_1\Lm^k(T) = 2^{n} \pmat{n \\ k}$.
Furthermore, the degrees of freedom of the space $\mc{S}_1\Lm^k(\mc{T}_h)$ are given by 
\begin{equation}\label{DOF-S1}
\int_f \tr_f u \wedge v, \qquad v \in \mc{P}_1 \Lm^0 (f), \quad f \in \lap_k(\mc{T}_h).
\end{equation}
In the special cases $k=0$ and $k=n$ we have 
\[
\mc{S}_1\Lm^0(\mc{T}_h) = \mc{Q}_1\Lm^0(\mc{T}_h) \quad \text{and } \mc{S}_1\Lm^n(\mc{T}_h) = \mc{P}_1\Lm^n(\mc{T}_h).
\]
Furthermore, when $n=2$ or $3$
the degrees of freedom of the space
$\mc{S}_1 \Lm^{n-1}(\mc{T}_h)$ corresponds to the degrees of freedom of the ${\rm BDM}_1$ and the ${\rm BDDF}_1$ spaces.  

It follows from \eqref{local-S1} that $d \mc{S}_1\Lm^k(\mc{T}_h) = d \mc{Q}_1^-\Lm^k(\mc{T}_h)$.
Since it is well known that the pair $(\mc{Q}_1^-\Lm^{k-1}(\mc{T}_h), \mc{Q}_1^-\Lm^k(\mc{T}_h))$
is a stable pair for the standard mixed formulation \eqref{hodge-mixed-1-h},
cf. \cite{Arnold-Boffi-Bonizzoni-2015},
it is an easy consequence of this property  that the pair 
$(\mc{S}_1 \Lm^{k-1}(\mc{T}_h),\mc{Q}_1^-\Lm^k(\mc{T}_h))$ also leads to a stable method.
However, as we have already indicated above, the spaces $\mc{S}_1\Lm^{k-1}(\mc{T}_h)$ has to be enriched 
in order to be useful in the present setting, i.e., to give rise to a method with a local coderivative $d_h^*$.
This larger space is introduced below and denoted $\mc{S}_1^+\Lm^k(\mc{T}_h)$.

\subsection{The spaces  $\mc{S}_1^+\Lm^k(\mc{T}_h)$}
For $\mc{S}_1^+ \Lm^k(\mc{T}_h)$ we first define the space of shape functions $\mc{S}_1^+ \Lm^k$. 
We prove that this space is invariant under dilation and translation, then the space of local shape functions on $T$ 
is well-defined as the restriction of $\mc{S}_1^+ \Lm^k$ on $T$. 

To define $\mc{S}_1^+ \Lm^k$, let $\mc{B} \Lm^{k}$ be the span of forms $\{ p_{\s^*}p_{\s} dx_{\s} \, | \, \s \in \Sigma(k) \, \}$,
where $p_{\s} \in \Q_1(\R^k)$ and $p_{\s^*} \in \Q_1(\R^{n-k})$ are polynomials in the variables 
$\{ x_j \}_{j \in \llbracket \s \rrbracket }$
and $\{ x_j \}_{j \in \llbracket \s^* \rrbracket }$, respectively, and where $p_{\s}(0) = 0$.
From this definition of $\mc{B} \Lm^k$ it is obvious that 
\algn{ \label{Q1-direct-sum}
\mc{Q}_1 \Lm^k = \mc{Q}_1^- \Lm^k \oplus \mc{B} \Lm^k .
}
Furthermore, it follows directly from the definition of $\mc{B} \Lm^k$ that 
$d \mc{B} \Lm^k \subset \mc{B} \Lm^{k+1}$. In fact, if $u \in \mc{H}_r \Lm^k \cap \mc{B} \Lm^k$ 
then, by \eqref{eq:homotopy-formula},    $du = (r+k) d \kappa d u \in d\kappa \mc{B} \Lm^{k+1}$. Therefore,  we can conclude that 
\algn{ \label{inclusion-1}
d \mc{B} \Lm^k \subset \mc{B} \Lm^{k+1} \cap d \kappa \mc{B} \Lm^{k+1}.
}
We define $\mc{S}_1^+ \Lm^k$ by 
\algn{ \label{S1p-shape}
\mc{S}_1^+ \Lm^k = \mc{Q}_1^- \Lm^k + d \kappa \mc{B} \Lm^k ,
}
and we now prove that this is invariant under dilation and translation. 
\begin{lemma} \label{invariant}
If $\phi : \R^n \ra \R^n$ is a composition of dilation and translation, then 
$\phi^* \mc{S}_1^+ \Lm^k \subset \mc{S}_1^+ \Lm^k$, where $\phi^*$ is the corresponding pullback.
\end{lemma}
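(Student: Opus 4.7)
The plan is to treat diagonal dilation $\phi(x) = \Lambda x$ and translation $\phi(x) = x + a$ separately, since any composition of a dilation and a translation factors through these. For the dilation, the pullback $\phi^*$ acts on coefficients of $dx_\s$ by substitution and on $dx_\s$ itself by multiplication by $\prod_{i \in \llbracket \s \rrbracket} \lambda_i$. It therefore preserves both summands of the direct sum \eqref{Q1-direct-sum} (the defining condition $p_\s(0) = 0$ in $\mc{B} \Lm^k$ is preserved because $\Lambda \cdot 0 = 0$). A direct computation using $d\phi = \Lambda$ and the definition $(\kappa u)_x = u_x \lrcorner x$ shows $\phi^* \kappa = \kappa \phi^*$, and since $\phi^* d = d \phi^*$ always holds, this gives $\phi^* d\kappa \mc{B} \Lm^k = d\kappa \phi^* \mc{B} \Lm^k \subset d\kappa \mc{B} \Lm^k$ and $\phi^* \mc{Q}_1^- \Lm^k \subset \mc{Q}_1^- \Lm^k$, so $\phi^* \mc{S}_1^+ \Lm^k \subset \mc{S}_1^+ \Lm^k$.

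For a pure translation, $d\phi = I$, but the commutation of $\kappa$ with $\phi^*$ fails because the position vector itself shifts. A direct calculation from the definitions yields the commutator identity
\[
\phi^* \kappa u = \kappa \phi^* u + \phi^*(\iota_a u).
\]
Applying $d$, together with $\iota_a \phi^* = \phi^* \iota_a$ (which holds because $d\phi = I$), gives $\phi^* d\kappa u = d\kappa \phi^* u + d\iota_a \phi^* u$. The strategy is now to show that both summands on the right lie in $\mc{S}_1^+ \Lm^k$ for every $u \in \mc{B} \Lm^k$.

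To do so I need four preparatory facts: (i) $\mc{Q}_1 \Lm^k$ and $\mc{Q}_1^- \Lm^k$ are translation-invariant (the coefficient structure \eqref{local-Q-} is obviously preserved under $x \mapsto x + a$), and contraction with a constant vector sends $\mc{Q}_1 \Lm^k$ into $\mc{Q}_1 \Lm^{k-1}$; (ii) $d$ maps $\mc{Q}_1^- \Lm^{k-1}$ into $\mc{Q}_1^- \Lm^k$, again directly from \eqref{local-Q-}; (iii) $\kappa$ maps $\mc{Q}_1^- \Lm^{k+1}$ into $\mc{Q}_1^- \Lm^k$, verified by writing out the coefficient of $dx_\tau$ in $\kappa u$ as $\sum_{j \in \llbracket \tau^* \rrbracket} \pm x_j u_{\tau \cup \{j\}}$ and observing that each $u_{\tau \cup \{j\}}$ depends only on variables in $\llbracket \tau^* \rrbracket \setminus \{j\}$; (iv) combining (ii) and (iii) with the Koszul homotopy formula \eqref{eq:homotopy-formula} applied to each homogeneous component of $v \in \mc{Q}_1^- \Lm^k$ via $d\kappa v = (r+k) v - \kappa d v$, we conclude $d\kappa \mc{Q}_1^- \Lm^k \subset \mc{Q}_1^- \Lm^k$.

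With these in hand, write $u = u_0 + d\kappa b \in \mc{S}_1^+ \Lm^k$ with $u_0 \in \mc{Q}_1^- \Lm^k$ and $b \in \mc{B} \Lm^k$. Then $\phi^* u_0 \in \mc{Q}_1^- \Lm^k$ by (i). For $\phi^* d\kappa b$, decompose $\phi^* b \in \mc{Q}_1 \Lm^k$ as $c^- + c^B$ via \eqref{Q1-direct-sum}; by (iv) and \eqref{S1p-shape}, $d\kappa \phi^* b = d\kappa c^- + d\kappa c^B \in \mc{Q}_1^- \Lm^k + d\kappa \mc{B} \Lm^k = \mc{S}_1^+ \Lm^k$. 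For the correction $d\iota_a \phi^* b$, note $\iota_a \phi^* b \in \mc{Q}_1 \Lm^{k-1}$ by (i); decomposing it as $w^- + w^B$, we have $dw^- \in \mc{Q}_1^- \Lm^k$ by (ii) and $dw^B \in d \mc{B} \Lm^{k-1} \subset d\kappa \mc{B} \Lm^k$ by \eqref{inclusion-1}, so $d\iota_a \phi^* b \in \mc{S}_1^+ \Lm^k$. The main technical obstacle is step (iv): closure of $\mc{Q}_1^- \Lm^k$ under $d\kappa$ is not immediate from the definitions, and relies on the careful indexing in (ii) and (iii) meshing correctly with the homotopy identity.
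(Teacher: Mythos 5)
Your proof is correct and takes essentially the same route as the paper's: the same commutator identity expressing $\phi^*\kappa u$ as $\kappa \phi^* u$ plus a contraction of $\phi^* u$ with a constant vector, the same decomposition of the resulting $\mc{Q}_1\Lm$-forms via \eqref{Q1-direct-sum}, and the same appeal to \eqref{inclusion-1} for the correction term. The only differences are cosmetic: you treat dilation and translation separately where the paper handles $\phi(x)=Dx+b$ in one pass, and you spell out the closure facts $d\kappa\,\mc{Q}_1^-\Lm^k\subset\mc{Q}_1^-\Lm^k$ and $d\,\mc{Q}_1^-\Lm^{k-1}\subset\mc{Q}_1^-\Lm^k$ that the paper leaves as ``easily checked.''
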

\begin{proof}

Let $\phi (x) = D x + b$ for a given invertible $n \times n$ diagonal matrix $D$ and a vector $b \in \R^n$.
To show $\phi^* \mc{S}_1^+ \Lm^k \subset \mc{S}_1^+ \Lm^k$, assume that $u \in \mc{S}_1^+ \Lm^k$ is written as $u = u^- + d \kappa u^+$ with $u^- \in \mc{Q}_1^- \Lm^k$ and $u^+ \in \mc{B} \Lm^k$. Then we have  
\algns{
\phi^* u = \phi^* u^- + \phi^* d \kappa u^+ = \phi^* u^- + d \phi^* \kappa u^+ = \phi^* u^- + d \kappa \phi^* u^+ + d((\phi^* u^+) \lrcorner b)
}
where we used $\phi^* \kappa u^+ = \kappa \phi^* u^+ + (\phi^* u^+) \lrcorner b$ in the last equality (cf. \cite[Section 3.2]{AFW06}).
We easily check that 
$\phi^* u^- \in \Q_1^-\Lm^k$ and from \eqref{Q1-direct-sum} we have 
\[
d \kappa \phi^* u^+ \in d \kappa \Q_1\Lm^k = d \kappa (\mc{Q}_1^- \Lm^k \oplus \mc{B} \Lm^k) \subset \mc{Q}_1^- \Lm^k 
+ d\kappa \mc{B} \Lm^k = \mc{S}_1^+ \Lm^k.
\]
It remains  to show that 
\algn{ \label{dphi-up}
d ((\phi^* u^+) \lrcorner b) \in \mc{S}_1^+ \Lm^k .
}
To see this, note that $(\phi^* u^+) \lrcorner b \in \mc{Q}_1 \Lm^{k-1}$, 
so $(\phi^* u^+) \lrcorner b  \in \mc{Q}_1^- \Lm^{k-1} \oplus \mc{B} \Lm^{k-1}$ by \eqref{Q1-direct-sum}. 
By \eqref{inclusion-1} we therefore have 
\[
d ((\phi^* u^+) \lrcorner b) \in \mc{Q}_1^- \Lm^{k} + d \kappa \mc{B} \Lm^{k} = \mc{S}_1^+ \Lm^k,
\]
so \eqref{dphi-up} is established.
%
%
\end{proof}
We define the space of shape functions of $\mc{S}_1^+ \Lm^k(T)$ on an element 
$T \in \mc{T}_h$ as the restriction of the functions 
in the class $\S_1^+ \Lm^k$ such that 
\algn{ \label{local-S1+new}
\S_1^+ \Lm^k (T) = \Q_1^- \Lm^k (T) + d \kappa \mc{B} \Lm^k(T) .
}
By comparing the definition above with the characterizations of the spaces $\mc{Q}_1^-\Lm^k(T)$ and $\mc{S}_1\Lm^k(T)$
we can conclude that  $\mc{S}_1^+\Lm^k(T)$ contains these spaces. 
It also follows directly from the definition that $\dim \mc{B} \Lm^k(T) = \pmat{n \\ k} 2^{n-k} (2^k - 1)$, and therefore
we must have  
\begin{equation}\label{dim-leq}
\dim  \mc{S}_1^+\Lm^k(T) \le \dim \mc{Q}_1^-\Lm^k(T) + \dim \mc{B} \Lm^k(T)
=2^{n} \pmat{n \\ k}.
\end{equation}
In fact, we will show below that this inequality is an equality, and that the degrees of freedom for this space is 
\begin{equation}\label{DOF-S1+}
\int_f \tr_f u \wedge v, \qquad v \in \mc{Q}_1 \Lm^0 (f), \quad f \in \lap_k(T).
\end{equation}
Furthermore, it will follow from the discussion below that for any $u \in \mc{S}_1^+\Lm^k(\mc{T}_h)$ the degrees of freedom associated an interface 
$f \in \lap_{n-1}(\mc{T}_h)$ determines $\tr_f u$ uniquely. As a consequence, 
$\mc{S}_1^+\Lm^k(\mc{T}_h) \subset H\Lm^k(\Omega)$, and 
\[
\mc{Q}_1^- \Lm^k(\mc{T}_h) \subset \mc{S}_1\Lm^k(\mc{T}_h) \subset \mc{S}_1^+\Lm^k(\mc{T}_h).
\]
From the definition above we can easily derive that $\mc{S}_1^+\Lm^0(\mc{T}_h) = \mc{Q}_1\Lm^0(\mc{T}_h)= \mc{Q}_1^-\Lm^0(\mc{T}_h)$,
and for $k=n$ it is a consequence of  \eqref{eq:homotopy-formula} that 
$d \kappa \mc{B} \Lm^n (T) = \mc{B} \Lm^n(T)$. Therefore,  $\S_1^+ \Lm^n(\mc{T}_h) = \Q_1 \Lm^n (\mc{T}_h)$.  
Moreover, from \eqref{local-S1+new} we have that $ d \S_1^+ \Lm^k(\mc{T}_h) =     d\Q_1^- \Lm^k(\mc{T}_h) $.
As above, we can therefore conclude that the pair $(\mc{S}_1^+\Lm^{k-1}(\mc{T}_h), \Q_1^- \Lm^k(\mc{T}_h) )$
is a stable pair for the mixed formulation \eqref{hodge-mixed-1-h}. Moreover, in the present case we will be able to 
construct a suitable integration rule such that conditions {\bf (A)} and {\bf (B)} of Section \ref{sec-abstract} are fulfilled,
and which leads to a local coderivative $d_h^*$. However, first we need to analyze the spaces
$\S_1^+ \Lm^k(\mc{T}_h)$ just introduced.

If $m$ is a $k$-form given by $m = p dx_{\s}$, where $\s \in \Sigma(k)$ and the coefficient polynomial $p(x)$ is a monomial,
then we will refer to $m$ as a form monomial.

\begin{lemma} \label{lemma:Blm} The following hold:
\begin{itemize}
 \item[(a)] \label{B1m-1} For a form monomial $m\not = 0$ in $\mc{B} \Lm^k(T)$, $\kappa m$ generates at least one form monomial such that its coefficient contains a quadratic factor. 
 \item[(b)] For $u \in \mc{B} \Lm^k(T)$, the coefficient of each form monomial of $d \kappa u$ has at most one quadratic factor. 
 \item[(c)] For $u \in \mc{B} \Lm^k(T)$ and $f \in \lap_k(T)$, $\tr_f (d \kappa u) \in \Q_1 \Lm^k(f)$. 
 \item[(d)] The operator $d\kappa$ is injective on $\mc{B} \Lm^k(T)$.
\end{itemize}
\end{lemma}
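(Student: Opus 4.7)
The plan is to dispatch parts (a)--(c) by direct monomial computation, then attack (d) using the Koszul homotopy \eqref{eq:homotopy-formula} together with a combinatorial injectivity argument for $\kappa$ on $\mc{B}\Lm^k(T)$. For (a), writing a form monomial as $m = x^\alpha dx_\s$ with $\alpha \in \{0,1\}^n$, the requirement $p_\s(0)=0$ in the definition of $\mc{B}\Lm^k$ forces $\alpha_{\s_j}=1$ for at least one $j\in\{1,\dots,k\}$. The $j$-th summand in the expansion of $\kappa m$ then carries coefficient $x_{\s_j}x^\alpha$ containing $x_{\s_j}^2$, and pairwise distinctness of the resulting $(k-1)$-form parts $dx_{\s_1}\wedge\cdots\wedge\widehat{dx_{\s_i}}\wedge\cdots\wedge dx_{\s_k}$ rules out cancellation inside $\kappa m$. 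For (b), every coefficient of $\kappa u$ is, up to sign, $x^\alpha x_{\s_i}$, in which $x_{\s_i}$ appears to degree at most $2$ and every other variable to degree at most $1$, so at most one quadratic factor is present. Since $\partial_l$ only decreases exponents, the subsequent $d$ cannot create a new quadratic factor.

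For (c), parameterize $f \in \lap_k(T)$ by fixing $x_j = c_j$ for $j \in J$, with $|J|=n-k$, and set $I := \{1,\dots,n\}\setminus J$. After the trace only form monomials $dx_{\sigma'}$ with $\llbracket\sigma'\rrbracket = I$ survive. A surviving summand of $d\kappa m$ arising from Koszul index $\s_j$ followed by the derivative $\partial_l$ has exterior part $\pm\, dx_l\wedge dx_{\s_1}\wedge\cdots\wedge\widehat{dx_{\s_j}}\wedge\cdots\wedge dx_{\s_k}$, so $\llbracket\sigma'\rrbracket = \{l\}\cup(\llbracket\s\rrbracket\setminus\{\s_j\}) = I$. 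This forces $\s_j \notin I$, i.e., $\s_j \in J$, and therefore the unique possible quadratic factor $x_{\s_j}^2$ produced by (b) reduces to the constant $c_{\s_j}^2$ on $f$. Every restricted coefficient is thus multilinear in $\{x_i\}_{i\in I}$ and hence lies in $\Q_1(f)$.

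Part (d) is the main obstacle. My plan is to prove the sharper statement that $\kappa$ itself is injective on $\mc{B}\Lm^k(T)$. Granting this, injectivity of $d\kappa$ follows because $\mc{B}\Lm^k(T)$ is graded by total polynomial degree (its monomial generators $x^\alpha dx_\s$ are homogeneous) and $d\kappa$ preserves that grading, so each homogeneous component $u_r$ of degree $r\ge 1$ can be treated separately; the identity \eqref{eq:homotopy-formula} reads $(r+k)u_r = \kappa du_r + d\kappa u_r$, so $d\kappa u_r = 0$ gives $u_r = \tfrac{1}{r+k}\kappa du_r$ and thus $\kappa u_r = 0$ by $\kappa^2 = 0$. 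To prove the sharper statement, expand $u = \sum_\s u_\s\, dx_\s$, where each $u_\s$ is multilinear and every monomial $x^\beta$ of $u_\s$ has $\beta_i = 1$ for some $i\in\llbracket\s\rrbracket$. Collecting the coefficient of $dx_\tau$ for $\tau\in\Sigma(k-1)$ in $\kappa u = 0$ yields
\[
\sum_{j\notin\llbracket\tau\rrbracket}\epsilon_{\tau,j}\,x_j\,u_{\tau\cup j} = 0,
\]
where $\tau\cup j$ denotes the increasing sequence in $\Sigma(k)$ with range $\llbracket\tau\rrbracket\cup\{j\}$ and $\epsilon_{\tau,j}=\pm 1$. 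For $j_0\notin\llbracket\tau\rrbracket$ and any $\beta\in\{0,1\}^n$ with $\beta_{j_0}=1$, only the $j_0$-th summand can contribute to the coefficient of $x^{\beta+e_{j_0}}$ (which carries the factor $x_{j_0}^2$), since every other $u_{\tau\cup j'}$ is multilinear. This forces the $x^\beta$-coefficient of $u_{\tau\cup j_0}$ to vanish; letting $j_0$ range over $\llbracket\s\rrbracket$ with $\tau = \s\setminus\{j_0\}$ annihilates every monomial of $u_\s$ permitted by the $\mc{B}$-condition, so $u = 0$.
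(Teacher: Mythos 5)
Your proposal follows the same strategy as the paper's proof in all four parts: (a) and (b) by direct monomial computation exploiting $p_\s(0)=0$; (c) by a case analysis of which terms of $d\kappa m$ survive the trace; and (d) by first proving that $\kappa$ itself is injective on $\mc{B}\Lm^k(T)$ via the quadratic factors produced in (a), then upgrading to $d\kappa$ with the homotopy identity \eqref{eq:homotopy-formula}. Your bookkeeping in (d) --- extracting the coefficient of $x^{\beta+e_{j_0}}\,dx_\tau$ and using multilinearity of the remaining summands --- is a clean and slightly more explicit version of the paper's argument that the quadratic-factor monomials arising from distinct basis elements of $\mc{B}\Lm^k(T)$ are pairwise distinct; and your reduction of $d\kappa$-injectivity to $\kappa$-injectivity via the polynomial-degree grading is equivalent to the paper's remark that $d$ is injective on the image of $\kappa$.

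There is one slip in (c). From $\llbracket\sigma'\rrbracket=\{l\}\cup(\llbracket\s\rrbracket\setminus\{\s_j\})=I$ you infer $\s_j\notin I$, but this fails when $l=\s_j$: in that case the exterior part is (up to sign) $dx_\s$ itself, so $\llbracket\sigma'\rrbracket=\llbracket\s\rrbracket$ and $\s_j\in I$, and the quadratic factor $x_{\s_j}^2$ does \emph{not} reduce to a constant on $f$. The conclusion nevertheless holds in that case, for a different reason: the surviving coefficient is $\partial_{\s_j}(x^\alpha x_{\s_j})$, and differentiating in $x_{\s_j}$ lowers its exponent from $2$ to at most $1$, so the restricted coefficient is again multilinear. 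This is precisely the first case, $I(f)=\llbracket\s^*\rrbracket$, in the paper's explicit case analysis of $\tr_f(dm_i)$; adding that one sentence closes the gap, and the rest of the proposal is sound.
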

\begin{proof}
Let us define $\mc{B}$ as 
\algn{ \label{eq:BLm-basis}
\mc{B} = \{ p_{\s^*} p_{\s} dx_\s \in \mc{B}\Lm^k(T) \;|\; \s \in \Sigma(k) \},
}
where $p_{\s}(x) = x^{\alpha}$ and $p_{\s^*}(x) = x^{\beta}$ are monomials in $\mc{Q}_1$ of the variables $\{ x_j \}_{j \in \llbracket \s \rrbracket }$
and $\{ x_j \}_{j \in \llbracket \s^* \rrbracket }$, respectively, and where $|\alpha| \ge 1$. The set $\mc{B}$ is a basis for $\mc{B} \Lm^k(T)$.

For $p_{\s^*} p_{\s} dx_{\s} \in \mc{B}$, $\kappa (p_{\s^*} p_{\s} dx_{\s})$ is a linear combination of
\algn{  \label{m_i}
m_i = (-1)^{i+1} p_{\s^*} p_{\s} x_{\s_i} dx_{\s_1} \wedge \cdots \wedge \widehat{dx_{\s_i}} \wedge \cdots \wedge dx_{\s_k} , \qquad 1 \leq i \leq k .
}
Since $p_\s$ has a factor $x_{\s_j}$ for some $\s_j \in \jump{\s}$, the coefficient of $m_j$ has  $x_{\s_j}^2$ as factor,
so claim (a) is proved. Furthermore, 
a direct computation gives 
\algns{
d m_i &= \pd_{\s_i} (p_{\s^*} p_{\s} x_{\s_i}) dx_{\s} \\
& \quad + (-1)^{i+1} \sum_{j \in \jump{\s^*}} (\pd_j p_{\s^*}) p_\s x_{\s_i} dx_{j} \wedge dx_{\s_1} \wedge \cdots \wedge \widehat{dx_{\s_i}} \wedge \cdots \wedge dx_{\s_k} .
}
and each of these coefficients has at most one quadratic factor. Therefore, claim (b) is proved.

To prove (c), it is enough to show that $\tr_f d m_i \in \Q_1 \Lm^k (f)$ for any $f \in \lap_k(T)$ and $1 \le i \le k$. 
Recall that $f \in \lap_k(T)$ is determined by fixing values of $n-k$ variables. Let $I(f) \subset \{1, \ldots, n\}$ be the set of indices such that $f$ is determined by fixing $x_l$ for all $l \in I(f)$.  By letting $\vol_f$ be the volume form on $f$, we have,
up to a sign, that 
\algns{
\tr_f (d m_i) = 
\case{ 
(\pd_{\s_i} (p_{\s^*} p_\s x_{\s_i}))|_f \vol_f , & \text{ if } I(f) = \jump{\s^*}, \\
({x}_{\s_i} (\pd_j p_{\s^*}) p_\s )|_f \vol_f, &\text{ if } I(f) = \{ \s_i \} \cup \jump{\s^*} \setminus \{j\}, \\
0, & \text{ otherwise.} 
}
}
Since all variables in $p_{\s^*} p_\s$ have degree at most 1, the same is the case for $\pd_{\s_i} (p_{\s^*} p_\s x_{\s_i})$,
while all variables in ${x}_{\s_i} (\pd_j p_{\s^*}) p_\s$ have degree at most 1, with a possible exception  for $x_{\s_i}$
 which is constant on $f$.
So claim (c) follows.

To prove the injectivity of $d\kappa$ on $\mc{B} \Lm^k(T)$, we first prove that $\kappa$ is injective.
To see this consider  two distinct elements 
$m= p_{\s^*} p_\s dx_\s$ and $\tilde m = p_{\tilde{\s}^*} p_{\tilde{\s}} dx_{\tilde{\s}}$ of  $\mc{B}$. 
We claim that all the monomials generated by $\kappa m$ and $\kappa \tilde m$
which have a quadratic factor, 
are also distinct. To see this we assume the contrary, i.e., that there are $i \in \jump{\sigma}$ and 
$\tilde i \in \jump{\tilde \sigma}$ such that $\jump{\sigma} \setminus \{i\} = \jump{\tilde \sigma} \setminus \{\tilde i\}$
and 
\begin{equation}\label{identity-quad}
x_i p_{\s^*} p_\s = \pm x_{\tilde i}p_{\tilde{\s}^*} p_{\tilde{\s}},
\end{equation}
where the left-hand side is quadratic in $x_i$ and the right-hand side is quadratic in $x_{\tilde i}$.  
Since $p_{\s^*} p_\s, p_{\tilde{\s}^*} p_{\tilde{\s}} \in \Q_1$, this can be true only when $i = \tilde i$, i.e., 
$\sigma = \tilde \sigma$. However, \eqref{identity-quad} implies that $m = \tilde m$, which is a contradiction. 
This implies that the elements of $\kappa(\mc{B})$ are linearly independent, and therefore $\kappa$ is injective on 
 $\mc{B} \Lm^k(T)$.
Finally, since  $d$ is injective on the image of $\kappa$ by \eqref{eq:homotopy-formula}, 
$d \kappa$ is injective on $\mc{B} \Lm^k(T)$.
\end{proof}

The following key result is a consequence of the lemma just established.
\begin{theorem} For $T \in \mc{T}_h$ and $ 0 \leq k \leq n$ we have 
\algns{
\dim \mc{S}_1^+ \Lm^k  (T)= 2^n \pmat{n \\ k} .
}
\end{theorem}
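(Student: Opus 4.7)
My plan is to exhibit $\mc{S}_1^+\Lm^k(T)$ as the direct sum $\mc{Q}_1^-\Lm^k(T) \oplus d\kappa\mc{B}\Lm^k(T)$ and then read the dimension off the two summands. The upper bound $\dim \mc{S}_1^+\Lm^k(T) \leq 2^n \binom{n}{k}$ is already in hand from \eqref{dim-leq}, so the whole problem reduces to showing that the sum is direct. Once that is known, Lemma \ref{lemma:Blm}(d) gives $d\kappa$ injective on $\mc{B}\Lm^k(T)$, hence $\dim d\kappa\mc{B}\Lm^k(T) = \dim\mc{B}\Lm^k(T) = 2^{n-k}(2^k-1)\binom{n}{k}$, and combining with $\dim \mc{Q}_1^-\Lm^k(T) = 2^{n-k}\binom{n}{k}$ yields exactly $\dim \mc{S}_1^+\Lm^k(T) = 2^n\binom{n}{k}$.

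Thus the task becomes showing that $\mc{Q}_1^-\Lm^k(T) \cap d\kappa\mc{B}\Lm^k(T) = \{0\}$. To this end, I will suppose $w = d\kappa u \in \mc{Q}_1^-\Lm^k(T)$ for some $u \in \mc{B}\Lm^k(T)$ and try to deduce $u = 0$. The idea is to expand $u = \sum_{m \in \mc{B}} c_m m$ in the monomial basis from \eqref{eq:BLm-basis}. Coefficients of elements of $\mc{Q}_1^-\Lm^k(T)$ are multilinear and carry no quadratic factors at all, so every monomial in $d\kappa u$ with a quadratic factor must cancel. Lemma \ref{lemma:Blm}(a) says that for each $m = p_{\s^*} p_\s dx_\s \in \mc{B}$, the expression $\kappa m$ has at least one monomial whose coefficient carries a $x_{\s_j}^2$ factor, with $\s_j \in \llbracket \s \rrbracket$ an index occurring in $p_\s$. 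After applying $d$, the derivatives in the $\llbracket \s^* \rrbracket$ directions preserve the quadratic factor and place it in the $dx_{\s'}$ component where $\s'$ is obtained from $\s$ by swapping $\s_j$ for some $l \in \llbracket \s^* \rrbracket$, and Lemma \ref{lemma:Blm}(b) guarantees no other quadratic factors can appear.

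The main obstacle is to verify that these surviving quadratic monomials remain linearly independent across distinct basis elements. This is a combinatorial extension of the argument already given in the proof of Lemma \ref{lemma:Blm}(d): that proof shows $\kappa m$ and $\kappa \tilde m$ produce distinct quadratic monomials when $m \neq \tilde m$, and the additional check needed here is that the subsequent $\partial_l$ in a $\llbracket \s^* \rrbracket$ direction preserves this distinction (rather than collapsing two such monomials into the same one after differentiation). Parameterizing each surviving quadratic monomial by the triple $(\s,\s_j,l)$ together with the underlying multi-indices $\alpha,\beta$ describing $p_\s, p_{\s^*}$ makes this bijection transparent: different $m$ yield different $(\s,\alpha,\beta)$, and the $\partial_l$ derivative reduces $\beta_l$ by one while $\alpha_{\s_j}+1 = 2$ is fixed, so the resulting monomial determines $(\s,\s_j,l,\alpha,\beta)$ uniquely. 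Consequently the quadratic monomials in $d\kappa u$ from distinct $m$ cannot cancel, every $c_m$ must vanish, and $u = 0$, closing the direct sum and therefore the dimension count.
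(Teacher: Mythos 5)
Your overall frame is fine --- reduce to showing that the sum \eqref{local-S1+new} is direct, then count dimensions using Lemma~\ref{lemma:Blm}(d) and \eqref{dim-leq}; that is exactly how the paper's proof is organized. The gap is in the way you try to prove directness: you test membership in $\Q_1^- \Lm^k(T)$ by looking for quadratic monomials in $d\kappa u$, but the quadratic factor that Lemma~\ref{lemma:Blm}(a) guarantees lives in $\kappa m$, and it does \emph{not} in general survive the application of $d$. Concretely, take $m = x_1\, dx_1\wedge dx_2$ (so $p_\s = x_1$, $p_{\s^*}\equiv 1$). Then $\kappa m = x_1^2\,dx_2 - x_1x_2\,dx_1$ has a quadratic monomial, but $dm=0$, so by \eqref{eq:homotopy-formula} $d\kappa m = 3m$, which has none. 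More generally, whenever $p_{\s^*}$ is constant there is no $l\in\llbracket\s^*\rrbracket$ with $\partial_l p_{\s^*}\neq 0$, so the ``swapped'' components you rely on have zero coefficient, and the vanishing of quadratic monomials in $d\kappa u$ imposes no constraint on $c_m$. Worse, your linear-independence claim for the surviving quadratic monomials is false for $k\ge 2$: the monomial only determines $\s'$, the exponent vector, and hence $\s_j$ (the unique index with exponent $2$), but not $l$. For example, in $\R^3$ with $m_A = x_1x_2\,dx_1\wedge dx_3$ and $m_B = x_1x_3\,dx_1\wedge dx_2$, the quadratic monomials of $d\kappa m_A$ and $d\kappa m_B$ are $+x_1^2\,dx_2\wedge dx_3$ and $-x_1^2\,dx_2\wedge dx_3$; they cancel in $d\kappa(m_A+m_B) = 4(m_A+m_B)$. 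The root cause is that $\ker d\cap\mc{B}\Lm^k(T)$ is nontrivial, and on its homogeneous elements $d\kappa$ is just multiplication by a positive scalar, producing elements of $\mc{B}\Lm^k(T)$ with no quadratic monomials at all; your test is blind to this entire subspace.

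The paper sidesteps precisely this by never applying the quadratic-monomial argument to $d\kappa u$. Since $d\kappa\mc{B}\Lm^k(T)\subset\ker d$, the intersection with $\Q_1^-\Lm^k(T)$ equals the intersection with $D:=\ker d\cap\Q_1^-\Lm^k(T)$, and \eqref{eq:homotopy-formula} gives $D = d\kappa D$; so directness reduces to injectivity of $d\kappa$, and hence of $\kappa$, on $D\oplus\mc{B}\Lm^k(T)$. There the quadratic factors of $\kappa m$ are genuinely present (no derivative has been taken), $\kappa D\subset\Q_1^-\Lm^{k-1}(T)$ carries none, and the independence argument from the proof of Lemma~\ref{lemma:Blm}(d) applies. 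If you want to rescue your route, you must at least split off $\ker d\cap\mc{B}\Lm^k(T)$ and handle it separately (its $d\kappa$-image lies in $\mc{B}\Lm^k(T)$, which meets $\Q_1^-\Lm^k(T)$ trivially by \eqref{Q1-direct-sum}), but at that point you have essentially reconstructed the paper's argument.
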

\begin{proof}
By Lemma~\ref{lemma:Blm}~(d), the spaces $d\kappa \mc{B} \Lm^k(T)$ and  $\mc{B} \Lm^k(T)$ have the same dimension, therefore
the conclusion will follow if we show that the sum \eqref{local-S1+new} is a direct sum, cf. \eqref{dim-leq}.
%
To show that the sum \eqref{local-S1+new} is direct, it is enough to show 
\algn{ \label{D-intersect}
D \cap d \kappa \mc{B} \Lm^k(T) = \{0\} , \qquad D:= \ker d \cap \Q_1^- \Lm^k(T) .
}
Note that $D \cap \mc{B} \Lm^k(T) = \{0\}$ due to \eqref{Q1-direct-sum}. 
Furthermore, by
\eqref{eq:homotopy-formula}, $D = (d \kappa + \kappa d) D = d \kappa D$. 
Therefore, \eqref{D-intersect} will follow if we can show that $d\kappa$ is injective on 
$D \oplus \mc{B} \Lm^k(T)$. 
However, by \eqref{eq:homotopy-formula} this will follow if we can show that  $\kappa$ 
is injective on $D \oplus \mc{B} \Lm^k(T)$. To see why this is the case we observe that 
$\kappa(\Q_1^- \Lm^k(T)) \subset \Q_1^- \Lm^{k-1}(T)$.  Combined with 
Lemma~\ref{lemma:Blm}~(a) this implies that $\kappa D \cap \kappa \mc{B} \Lm^k(T) = \{0\}$.
As a consequence, $\kappa$ is injective on $D \oplus \mc{B} \Lm^k(T)$ if it is injective 
on each of the spaces $D$ and $\mc{B} \Lm^k(T)$ separately. The latter statement  follows from 
Lemma~\ref{lemma:Blm}~(d), while the injectivity on $D$ follows from the fact 
that $\ker d \cap \ker \kappa = \{0\}$ by \eqref{eq:homotopy-formula}.
\end{proof}
We are now ready to prove unisolvency of $\mc{S}_1^+ \Lm^k(T)$ with the degrees of freedom \eqref{DOF-S1+}. 
\begin{theorem}\label{thm:unisolvency}
An element $u \in \mc{S}_1^+ \Lm^k(T)$ is uniquely determined by the degrees of freedom \eqref{DOF-S1+}.  
\end{theorem}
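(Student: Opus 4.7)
The plan is to combine a dimension count, a reduction from vanishing DOFs to vanishing face traces, an inductive reduction to the boundary, and a final divisibility argument. By the preceding theorem $\dim \mc{S}_1^+\Lm^k(T) = 2^n\binom{n}{k}$, while the number of DOFs in \eqref{DOF-S1+} equals $|\lap_k(T)| \cdot \dim \mc{Q}_1\Lm^0(f) = 2^{n-k}\binom{n}{k} \cdot 2^k = 2^n\binom{n}{k}$. These counts match, so it suffices to show that the DOF map is injective on $\mc{S}_1^+\Lm^k(T)$. Using the unique decomposition $u = u^- + d\kappa u^+$ from that theorem, Lemma~\ref{lemma:Blm}(c) together with $\tr_f u^- \in \mc{Q}_0\Lm^k(f) \subset \mc{Q}_1\Lm^k(f)$ shows $\tr_f u \in \mc{Q}_1\Lm^k(f)$ for every $f \in \lap_k(T)$. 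Writing $\tr_f u = p\, \vol_f$ with $p \in \mc{Q}_1(f)$, the DOFs on $f$ are the pairings $\int_f p \cdot v\, \vol_f$ for $v \in \mc{Q}_1(f)$, a non-degenerate bilinear form (taking $v=p$ yields $\int_f p^2\,\vol_f > 0$ unless $p \equiv 0$). Hence the hypothesis becomes $\tr_f u = 0$ for every $f \in \lap_k(T)$.

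I would then proceed by induction on $n$ (with $k$ fixed) to reduce the trace hypothesis to vanishing on every codim-1 face. The base $n = k$ is immediate since $T$ is the only $k$-face. For $n > k$, fix $F \in \lap_{n-1}(T)$: by Lemma~\ref{invariant} I may translate so that $F \ni 0$, making the identity \eqref{kos-tr} specialize to $\tr_F(d\kappa u^+) = d\kappa_F \tr_F u^+$. Combined with $\tr_F u^- \in \mc{Q}_1^-\Lm^k(F)$ and $\tr_F u^+ \in \mc{B}\Lm^k(F)$, this gives
\[
\tr_F u \in \mc{Q}_1^-\Lm^k(F) + d\kappa_F \mc{B}\Lm^k(F) = \mc{S}_1^+\Lm^k(F).
\]
Since every $k$-face of $F$ is a $k$-face of $T$, the inductive hypothesis applied to $F$ forces $\tr_F u = 0$ on every codim-1 face.

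To conclude, I normalize $T = [0,1]^n$ by Lemma~\ref{invariant} again. The vanishing of the codim-1 traces implies that each coefficient $u_\s$ of $dx_\s$ is divisible by $\prod_{j \in \jump{\s^*}} x_j(1 - x_j)$. Lemma~\ref{lemma:Blm}(b), together with the multilinearity of $u^-_\s$, constrains every monomial of $u_\s$ to have at most one variable of degree $2$. Writing $u_\s = Q_\s \prod_j(x_j - x_j^2)$ and expanding, any summand in which two or more factors contribute $x_j^2$ produces two variables of degree $2$, forcing the corresponding coefficients of $Q_\s$ to vanish; this argument closes the proof whenever $n - k \geq 2$. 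The main obstacle is the borderline case $n - k = 1$: the divisor has only a single quadratic factor and is compatible with the quadratic-factor bound, so the per-$\s$ divisibility alone does not conclude. My plan here is to exploit the injectivity of $d\kappa$ on $\mc{B}\Lm^k(T)$ from Lemma~\ref{lemma:Blm}(d) together with the coupling between the various $u_\s$'s induced by the shared $u^+$: the explicit action of $d\kappa$ on the basis of $\mc{B}\Lm^k(T)$ translates the divisibility conditions into a homogeneous linear system on the coefficients of $u^+$, which (as confirmed by the direct computation in the model case $n=2$, $k=1$) admits only the trivial solution, giving $u^+ = 0$ and hence $u = 0$.
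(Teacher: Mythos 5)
Your overall strategy coincides with the paper's: match the number of degrees of freedom against $\dim \mc{S}_1^+\Lm^k(T) = 2^n\binom{n}{k}$, use Lemma~\ref{lemma:Blm}(c) and the nondegeneracy of the pairing on $\mc{Q}_1\Lm^k(f)$ to convert vanishing DOFs into $\tr_f u = 0$ for all $f \in \lap_k(T)$, deduce that $u_\s$ is divisible by $\prod_{i\in\jump{\s^*}}x_i(1-x_i)$, and kill the cases $k\le n-2$ by the one-quadratic-factor bound of Lemma~\ref{lemma:Blm}(b). Your intermediate induction on the ambient dimension, which upgrades the $k$-face trace conditions to $\tr_F u = 0$ on every codimension-one face $F$ before extracting divisibility, is a genuine (and welcome) refinement: it makes the divisibility deduction airtight and essentially re-proves the trace inclusion of Theorem~\ref{thm:tr-inclusion} along the way, via the translation trick that puts $F$ through the origin so that \eqref{kos-tr} commutes $\tr_F$ with $d\kappa$ (just note that after translating you must re-decompose $u$ as $\tilde u^- + d\kappa\tilde u^+$ per Lemma~\ref{invariant}, since the condition $p_\s(0)=0$ defining $\mc{B}\Lm^k$ is not translation-invariant).

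However, there is a genuine gap at exactly the point you flag as "the main obstacle": the case $k = n-1$. You reduce it to the assertion that a certain homogeneous linear system on the coefficients of $u^+$ (and of $u^-$, which also enters) has only the trivial solution, and you support this only by the model computation for $n=2$, $k=1$. That assertion is the entire content of the hard case, and it is not a routine verification. The paper's proof of it occupies most of the argument and uses several ingredients absent from your plan: Stokes' theorem to extract the extra relation $du=0$, i.e.\ $\sum_i(-1)^{i+1}b_i=0$, from the vanishing boundary traces; the homotopy formula \eqref{eq:homotopy-formula} applied to $0$-forms to force $p_i = a_i = 0$; and then the introduction of $Q = \sum_j(-1)^j q_j$ together with the identity $\sum_i(x_i-\tfrac12)\partial_i Q = 0$ and an expansion of $Q$ in monomials of $x_i - \tfrac12$ to conclude $Q\equiv 0$ and finally $u_i = 0$. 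Without carrying out some version of this computation for general $n$, the proof is incomplete: the divisibility by the single quadratic factor $x_i(1-x_i)$ is, as you note, perfectly compatible with Lemma~\ref{lemma:Blm}(b), so nothing short of an explicit analysis of the coupled system closes the case $k=n-1$. You should either reproduce that analysis or find a substitute; checking $n=2$ does not establish the triviality of the kernel for all $n$.
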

\begin{proof} 
Since $\dim \mc{Q}_1 (f) = 2^k$ for $f \in \lap_k(T)$, the number of degrees of freedom  in \eqref{DOF-S1+} is 
\algns{
|\lap_k (T)|  \times 2^k = \pmat{n \\ k} 2^{n-k} \times 2^k = 2^n \pmat{n \\ k},   
}
which is same as $\dim  \mc{S}_1^+ \Lm^k(T)$. 
Therefore, it is enough to show that if 
$u \in \mc{S}_1^+ \Lm^k(T)$, and all the degrees of freedom \eqref{DOF-S1+} vanish, then $u= 0$.

By dilation and translation, we may assume that $T$ is the unit hypercube $[0,1]^n \subset \R^n$, cf. Lemma~\ref{invariant}.
Suppose that $u = \sum_{\s \in \Sigma(k)} u_\s dx_\s \in \S_1^+ \Lm^k(T)$ and all the degrees of freedom 
\eqref{DOF-S1+} of $u$ are zero. From Lemma~\ref{lemma:Blm} (c) we can conclude that 
$\tr_f u = 0$ for all $f \in \Delta_k(T)$. In particular, the coefficient $u_\s$ vanish for all faces $f$ where $x_i$ is fixed in $\{0,1\}$
for $i \in \jump{\s^*}$, and as a consequence $u_\s$
has 
$ \prod_{i \in \jump{\s^*}} x_i(1- x_i)$ as a factor. Therefore, if  $k < n-1$, it follows from 
Lemma~\ref{lemma:Blm} (b) that $u=0$.
Furthermore, if $k= n$, then $\S_1^+ \Lm^n(T) = \Q_1 \Lm^n (T)$, so $u=0$ is a direct consequence of the degrees of freedom in this case.

It remains to cover the case 
$k = n-1$. In this case $u$ can be written as 
\algns{
u = \sum_{i=1}^n u_i dx_{\s(i)},  \quad \text{where }dx_{\s(i)} := dx_1 \wedge \cdots \wedge \widehat{dx_i} \wedge \cdots \wedge dx_n. 
}
From the discussion above we already know that the coefficients
$u_i$ have $x_i(1-x_i)$ as factors. In other words, 
\begin{equation}\label{u_i-rep}
u_i = c_i x_i (1 - x_i), \quad i =1,2, \ldots ,n,
\end{equation}
where the coefficients $c_i$ may depend on $x$, but they are independent of $x_i$.
Furthermore, since $\tr u = 0$ on the boundary of $T$ and $du$ is a constant 
$n$-form, we can conclude from Stokes theorem that $du \equiv 0$.
Furthermore, $u$ is of the form
\[
u = \sum_{i=1}^n (a_i + b_i x_i ) dx_{\s(i)} + d \kappa u^+,
\]
with constant coefficients  $a_i$, $b_i$, and $u^+ \in \mc{B} \Lm^{n-1}(T)$. From the definition of the space $ \mc{B} \Lm^{n-1}(T)$ we have that
\[
u^+= \sum_{i=1}^n u^+_i dx_{\s(i)}, \qquad \text{with }   u^+_i = p_i + x_i q_i, 
\]
where the polynomials $p_i$ and $q_i$ are in $\Q_1$, independent of the variable $x_i$, and satisfies 
$p_i(0) = q_i(0) = 0$.

Note that 
\[
\kappa u^+_i dx_{\s(i)} = \sum_{j =1}^{i-1}(-1)^{j+1} x_j u^+_i dx_{\s(j,i)}
- \sum_{j =i+1}^{n}(-1)^{j+1} x_ju^+_i dx_{\s(i,j)},
\]
where $dx_{\s(i,j)}$, for $i <j$, is the $n-2$ form obtained from $dx_1 \wedge dx_2 \wedge \cdots \wedge dx_n$ by omitting $dx_i$ and $dx_j$.
If we let $v = d\kappa u^+$ then a further calculation using the definition of the exterior derivative gives
\[
v = \sum_{i=1}^n v_i dx_{\s(i)}, \quad \text{where } 
v_i = \sum_{j=1}^n \partial_j(x_j u^+_i + (-1)^{i+j+1}x_i u^+_j).
\]
From this it follows that the coefficients $u_i$ of $u$ can be represented as 
$u_i = u_i^1 + u_i^2$ where 
\[
u_i^1 = a_i + \sum_{j=1}^n \partial_j(x_j p_i + (-1)^{i+j+1} x_i p_j)
\quad \text{and } 
u_i^2 = x_i [ b_i + \sum_{j=1}^n \partial_j(x_j q_i + (-1)^{i+j+1} x_j q_j)].
\]
We observe that all terms in this expression for $u_i$, except for $a_i + \sum_{j \neq i} \partial_j(x_j p_i) $, has $x_i$ as a factor. In fact, this term is independent of the variable $x_i$, and therefore we must have 
\[
a_i + \sum_{j \neq i} \partial_j(x_j p_i) = a_i + (n-1)p_i + \sum_{j =1}^n x_j \partial_jp_i 
\equiv  0.
\]
However, by using \eqref{eq:homotopy-formula} in the special case of zero forms, we easily see that the only 
possible solution is $p_i = - a_i/(n-1)$. In particular, since $p_i(0) = 0$, we can conclude that both $p_i$ and $a_i$ are zero.
Therefore, $u_i = u_i^2 = x_i \tilde u_i$, where 
\begin{equation}\label{tilde-u-rep}
\tilde u_i = b_i + \sum_{j=1}^n \partial_j(x_j q_i + (-1)^{i+j+1} x_j q_j), \quad i =1,2, \ldots ,n.
\end{equation}
As a consequence, we obtain 
\[
\partial_i \tilde u_i = \sum_{j=1}^n \partial_i \partial_j(x_j q_i + (-1)^{i+j+1} x_j q_j) = \sum_{j=1}^n (-1)^{i+j+1}\partial_i q_j.
\]
However, by \eqref{u_i-rep}
we also have  $\tilde u_i = c_i(1-x_i)$ and $\partial_i \tilde u_i  = - c_i$,
and therefore we obtain
\[
c_i =  (-1)^{i} \partial_i Q, \quad \text{where } Q = \sum_{j=1}^n (-1)^j q_j.
\]
The equation we obtain from the two representations of $\tilde u_i$ can be written
\[
(-1)^{i} [b_i + nq_i + \sum_{j=1}^n x_j \partial_j q_i ] - Q = (1- x_i)\partial_i Q.
\]
Note that $du = \sum_i (-1)^{i+1} b_i = 0$ and therefore, by summing the equation above over $i$, we obtain 
\[
\sum_{i=1}^n (x_i - \frac{1}{2}) \partial_i Q = 0.
\]
However $Q \in \Q_1$, and by expanding $Q$ in monomials with respect to the variables $x_i - \frac{1}{2}$ we can conclude that 
$Q$ is a constant. Furthermore, it vanishes at the origin, so $Q \equiv 0$. From \eqref{tilde-u-rep} we then obtain that
\[
\frac{u_i}{x_i} = \tilde u_i = b_i + \sum_{j=1}^n \partial_j (x_j q_i),
\]
which is independent of the variable $x_i$. By \eqref{u_i-rep} this implies that each $u_i$ is zero.
This completes the proof.
\end{proof}

Next we consider the traces of elements in $\mc{S}_1^+ \Lm^k(T)$ on $f \in \lap_{n-1}(T)$. Since $f$ is defined by fixing 
one coordinate, the other $n-1$ variables define a coordinate system on $f$. In particular, we can define the corresponding Koszul operator $\kappa_f$ for differential forms on $f$, 
cf. \eqref{kos-tr}, and as a consequence the space $\mc{S}_1^+ \Lm^k(f)$ is defined by 
the embedding of $f$ into $\R^{n-1}$.

\begin{theorem} \label{thm:tr-inclusion}
If  $f \in \lap_{n-1}(T)$ and $k \le n-1$, then  
\algn{ \label{tr-inclusion0}
\tr_f \mc{S}_1^+ \Lm^k(T) \subset \mc{S}_1^+ \Lm^k(f). 
}
\end{theorem}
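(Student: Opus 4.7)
The plan is to use the decomposition $\mc{S}_1^+ \Lm^k(T) = \Q_1^- \Lm^k(T) + d\kappa \mc{B}\Lm^k(T)$ from \eqref{local-S1+new} and verify the trace inclusion for each summand separately. Fix $f \in \lap_{n-1}(T)$ obtained by freezing one coordinate $x_l = c$, and set $x^f = c\, e_l$ so that \eqref{kos-tr} takes the form $\tr_f \kappa u = \kappa_f \tr_f u + \tr_f(u \lrcorner x^f)$.

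For $u \in \Q_1^- \Lm^k(T)$, a direct monomial-by-monomial inspection of the local representation \eqref{local-Q-} yields the standard inclusion $\tr_f \Q_1^- \Lm^k(T) \subset \Q_1^- \Lm^k(f) \subset \mc{S}_1^+ \Lm^k(f)$ (when $l \in \lb \s \rb$ the factor $dx_\s$ has vanishing trace, and otherwise the degree constraint $\alpha_j \le 1 - \delta_{j,\s}$ is preserved on the remaining variables). For $u \in \mc{B}\Lm^k(T)$, combining $d \tr_f = \tr_f d$ with \eqref{kos-tr} gives
\[
\tr_f(d\kappa u) = d\kappa_f \tr_f u + d \tr_f(u \lrcorner x^f),
\]
and I handle each summand in turn. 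First, I claim $\tr_f \mc{B}\Lm^k(T) \subset \mc{B}\Lm^k(f)$: for a generator $p_{\s^*} p_\s dx_\s$, the trace vanishes when $l \in \lb \s \rb$, and otherwise equals $(p_{\s^*}|_{x_l=c})\, p_\s\, dx_\s$, where $p_{\s^*}|_{x_l = c} \in \Q_1$ in the variables $\lb \s^* \rb \setminus \{l\} = \lb \s^*_f \rb$ and $p_\s$ is unchanged and still satisfies $p_\s(0) = 0$. Hence $d\kappa_f \tr_f u \in d\kappa \mc{B}\Lm^k(f) \subset \mc{S}_1^+ \Lm^k(f)$. Second, $u \lrcorner x^f \in \Q_1 \Lm^{k-1}(T)$ by direct computation, so $\tr_f(u \lrcorner x^f) \in \Q_1 \Lm^{k-1}(f)$; applying \eqref{Q1-direct-sum} together with \eqref{inclusion-1} on $f$ then yields $d\Q_1\Lm^{k-1}(f) \subset \Q_1^-\Lm^k(f) + d\kappa \mc{B}\Lm^k(f) = \mc{S}_1^+\Lm^k(f)$.

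The main bookkeeping obstacle is tracking how the increasing sequences $\s \in \Sigma(k)$ and $\s^* \in \Sigma(n-k)$ transform when passing from $T$ to the face $f$, and in particular checking the degenerate case $\lb \s^* \rb = \{l\}$ (possible only for $k = n-1$), where the restricted coefficient $p_{\s^*_f}$ collapses to a scalar constant; this remains consistent with the definition of $\mc{B}\Lm^{n-1}(f)$ since $\Q_1$ over an empty variable set is just $\R$.
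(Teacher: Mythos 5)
Your proposal is correct, and it rests on the same foundation as the paper's proof: the decomposition $\mc{S}_1^+ \Lm^k(T) = \Q_1^- \Lm^k(T) + d\kappa \mc{B}\Lm^k(T)$, the observation that $\tr_f$ maps $\Q_1^-\Lm^k(T)$ into $\Q_1^-\Lm^k(f)$ and $\mc{B}\Lm^k(T)$ into $\mc{B}\Lm^k(f)$ (the paper's \eqref{tr-B}), and the inclusion \eqref{inclusion-1}. Where you diverge is in how the term $\tr_f(d\kappa u)$ for $u \in \mc{B}\Lm^k(T)$ is handled. The paper reduces to form monomials $u_\s dx_\s$ and runs a three-way case analysis on whether the frozen index lies in $\jump{\s}$ and on the factor structure of $u_\s$, computing $\tr_f \kappa u$ explicitly in each case (in particular obtaining $\tr_f\kappa u = c\,u_\s dx_\eta$ in the last two cases and deciding whether this lands in $\mc{B}\Lm^{k-1}(f)$ or $\Q_1^-\Lm^{k-1}(f)$). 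You instead apply the identity \eqref{kos-tr} uniformly, so that the only extra term is $d\,\tr_f(u\lrcorner x^f)$ with $u\lrcorner x^f \in \Q_1\Lm^{k-1}$, and you absorb it via \eqref{Q1-direct-sum} and \eqref{inclusion-1} --- which is precisely the mechanism the paper already uses to establish \eqref{dphi-up} in the proof of Lemma~\ref{invariant}. Your route is cleaner in that it avoids the monomial case analysis and makes the parallel with Lemma~\ref{invariant} explicit; the paper's case analysis, on the other hand, yields slightly more precise information about where each trace lands ($\Q_1^-$ versus $d\kappa_f\mc{B}$). One small point worth making explicit in your write-up: the final inclusion uses $d\,\Q_1^-\Lm^{k-1}(f) \subset \Q_1^-\Lm^{k}(f)$, which holds because the $\Q_1^-$ spaces form a subcomplex, cf. \eqref{eq:Sr-Qr-exact}.
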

\begin{proof}
Since the trace operator maps $\mc{Q}_1^- \Lm^k (T)$ into $\mc{Q}_1^- \Lm^k(f)$,
we only have to show that $\tr_f ( d \kappa \mc{B}\Lm^k(T)) \subset \mc{S}_1^+ \Lm^k(f)$.
%
%

Without loss of generality, we may assume that $f = \{x \in \R^n \,:\, x_1 = c\}$ for  a constant $c$. 
Note that the definition of $\mc{B} \Lm^k(T)$ then implies that 
\begin{equation}\label{tr-B}
 \tr_f \mc{B} \Lm^k(T) \subset \mc{B} \Lm^k(f).
 \end{equation}
Furthermore, any nonzero form monomial $u = u_\s dx_\s \in \mc{B} \Lm^k(T)$ satisfies one of the following
conditions:
\begin{itemize}
\item[i)] $1 \not \in \jump{\s}$,
\item[ii)] $1 \in \jump{\s} $ and there exists $i \in \jump{\s},\, i \not = 1, $ such that $u_\s$ has $x_i$ as a factor,
\item[iii)] $1 \in \jump{\s} $ and there exists no $i \in \jump{\s},\, i \not = 1, $ such that $u_\s$ has $x_i$ as a factor.
\end{itemize}
%
%

We will  prove that $\tr_f d \kappa u \in \mc{S}_1^+ \Lm^k(f)$ in each of these cases.
In case i) it follows from \eqref{kos-tr} that $\tr_f \kappa u = \kappa_f \tr_f u$, since $u \lrcorner x^f = 0$. Therefore, $\tr_f d \kappa u = d \kappa_f \tr_f u $, and this is in $d\kappa_f \mc{B} \Lm^k(f) \subset \mc{S}_1^+ \Lm^k(f)$ by \eqref{tr-B}.
In case ii) and iii) we write $u$ as $u_\s dx_1 \wedge dx_{\eta}$, where $\jump{\eta} \subset \{2, \ldots ,n\}$.
A direct computation shows that $\tr_f \kappa u = c u_\s dx_{\eta}$.
In case ii)
there is $i \in \jump{\eta}$ such that 
$u_\s$ has $x_i$ as a factor. Therefore, $\tr_f \kappa u \in \mc{B} \Lm^{k-1}(f)$, and  
\eqref{inclusion-1} implies that $\tr_f d \kappa u  = d \tr_f \kappa u$ is in 
$d \kappa_f \mc{B} \Lm^{k}(f) \subset \mc{S}_1^+ \Lm^k(f)$. 
Finally, in case iii) $\tr_f \kappa u \in \Q_1^-\Lm^{k-1}(f)$, and therefore 
\[
\tr_f d\kappa u = d \tr_f \kappa u \in \Q_1^-\Lm^{k}(f) \subset \mc{S}_1^+ \Lm^k(f).
\]
This completes the proof.
\end{proof}
The inclusion \eqref{tr-inclusion0} is indeed an equality. In fact, this follows since an element of $\mc{S}_1^+ \Lm^k(f)$
is uniquely determined by degrees of freedom associated the elements of $\lap_k(f)$.
Furthermore, the trace result can be used repeatedly to conclude that 
\[
\tr_f \mc{S}_1^+ \Lm^k(T) = \mc{S}_1^+ \Lm^k(f), \quad f \in \lap(T), \: n \ge \dim f \ge k.
\]
In particular, if $\dim f = k$ we have 
\begin{equation}\label{k-trace}
\tr_f \mc{S}_1^+ \Lm^k(T) = \mc{Q}_1\Lm^k(f), \quad f \in \lap_k(T).
\end{equation}
An important consequence of the combination of the Theorems \ref{thm:unisolvency} and \ref{thm:tr-inclusion}
is also that the space $\mc{S}_1^+ \Lm^k(\mc{T}_h)$ is a subspace of $H\Lm^k(\Omega)$, since the traces are continuous over 
elements of $\lap_{n-1}(\mc{T}_h)$.
Furthermore, as we have already indicated above, it is a consequence of the fact that 
$d\Q_1^- \Lm^k(\mc{T}_h) = d \S_1^+ \Lm^k(\mc{T}_h)$ and the stability of the method derived from the $\mc{Q}_1^-\Lm^k$ spaces,
 that the pair $(\mc{S}_1^+\Lm^{k-1}(\mc{T}_h), \Q_1^- \Lm^k(\mc{T}_h) )$
is a stable pair for the mixed formulation \eqref{hodge-mixed-1-h}. 
Therefore, according to the abstract theory in Section \ref{sec-abstract}, to obtain a convergent method with a local 
coderivative  $d_h^*$, we need to define a proper  integration rule such that conditions  {\bf (A)} and {\bf (B)} holds.

\subsection{The local method}\label{local-c}
It is a consequence of the standard error estimate \eqref{eq:tilde-approx} that the choices $V_h^{k-1} = \mc{S}_1^+ \Lm^{k-1}(\mc{T}_h)$ and $V_h^k = \Q_1^- \Lm^k(\mc{T}_h)$ for 
the standard mixed method \eqref{hodge-mixed-1-h} will, under the assumption of a sufficiently regular solution,
lead to an  estimate for the error in the energy norm of order $O(h)$. Therefore, the goal is to perturb the method such that we preserve this convergence order, and also local coderivatives $d_h^*$. As in the simplicial case the discussion is based on the abstract theory of 
Section~\ref{sec-abstract}. Furthermore, $W^k = L^2\Lm^k(\Omega)$, $ V^k = H\Lm^k(\Omega)$,
and $\LRa{ \cdot, \cdot}$ is used to denote appropriate $L^2$ inner products.

In the present case condition {\bf (B)}
will appear slightly more complicated than in the simplicial case, since the space $\tilde{V}_h^{k-1}$ is strictly contained 
in $V_h^{k-1}$. In fact, we will take $\tilde{V}_h^{k-1} = \mc{Q}_1^- \Lm^{k-1}(\mc{T}_h)$ and as in the simplicial case 
the space 
$W_h^{k-1}$ is given by \eqref{eq:Wh-simplicial}, i.e., it consists of piecewise constant $(k-1)$-forms.
As a consquence, it follows from Theorem~\ref{thm:main} that if we are able to define a modified inner product on 
$\mc{S}_1^+ \Lm^{k-1}(\mc{T}_h)$ such that  conditions  {\bf (A)} and {\bf (B)} hold with these choices, then the linear convergence is obtained. 

As in the simplical case our choice of modified  inner product can be motivated from an alternative set of degrees of freedom 
for the spaces $\mc{S}_1^+ \Lm^{k}(\mc{T}_h)$. The degrees of freedom for this space given by \eqref{DOF-S1+} shows that 
the global dimension of this space is given by 
\[
\dim \mc{S}_1^+ \Lm^{k}(\mc{T}_h) = 2^k | \lap_k(\mc{T}_h) |.
\]
In particular, $\tr_f u$ for $f \in \lap_k(\mc{T}_h)$ and $ u \in \mc{S}_1^+ \Lm^{k}(\mc{T}_h)$ is uniquely determined 
by the $2^k$ degrees of freedom associated $f$. However, elements of $\Q_1 \Lm^k(f)$ can be identified by an element in 
$\Q_1 \Lm^0(f)$, and therefore $\tr_f u$ is also determined by the values of $\tr_f u$ at the $2^k$ vertices  of $f$.
More precisely, for each $f \in \lap_k(\mc{T}_h)$ and each $x_0 \in \lap_0(f)$ we define the functional $\phi_{f,x}$ by 
\[
\phi_{f,x_0}(u) = u_x(x_1 - x_0, x_2 - x_0, \ldots ,x_k - x_0),
\]
where $\{ x_j \}_{j=1}^k$ are the $k$ vertices of $f$ such that $[x_0,x_j] \in \lap_1(f)$. The functionals $\phi_{f,x}$ for $x \in \lap_0(f)$
will determine $\tr_f u$ uniquely, and the set $\{\phi_{f,x} \: | \: f \in \lap_k(\mc{T}_h), \: x \in \lap_0(f) \: \}$ will be a set of global degrees of freedom of $\mc{S}_1^+ \Lm^{k}(\mc{T}_h)$. Furthermore, if $T \in \mc{T}_h$, then restriction of 
$u \in \mc{S}_1^+ \Lm^{k}(\mc{T}_h)$ to $T$ at the vertex $x_0 \in \lap_0(T)$ is determined by the 
 $\pmat{n \\ k}$ possible choices of $\phi_{f,x_0}(u) $ for $f \in \lap_k(T)$ such that $x_0 \in f$.

We will let $\{ \psi_{f,x} \}$ be the corresponding dual basis for the space $\mc{S}_1^+ \Lm^{k}(\mc{T}_h)$,
defined by 
\[
\phi_{g,y} (\psi_{f,x}) = \delta_{(f,x),(g,y)}, \qquad f,g \in \Delta_k(\mc{T}_h), \, x \in \Delta_0(f), \, y \in \Delta_0(g).
\]
The modified inner product $\LRa{\cdot, \cdot }_h$ on $\mc{S}_1^+ \Lm^{k}(\mc{T}_h)$ is now defined by
\begin{equation}\label{quadrature-c}
\LRa{u, v }_h = \sum_{T \in \mc{T}_h} \LRa{u, v }_{h,T}, \quad \text{where } 
\LRa{u, v}_{h,T} = 2^{-n} |T|  \sum_{x \in \lap_0(T)} \LRa{u_{x}, v_{x} }_{\Alt}.
\end{equation}
It follows from the discussion of degrees of freedom above that
the quadratic form $\LRa{\cdot, \cdot }_h$  is an inner product $\mc{S}_1^+ \Lm^{k}(\mc{T}_h)$,
and a standard scaling argument shows that it is equivalent to the standard $L^2$ inner product.
So condition {\bf (A)} holds.

Next, we will verify condition {\bf (B)}, but with $k-1$ replaced by $k$ to simplify the notation. 
We observe that the inner product $\LRa{\cdot, \cdot }_h$ satisfies
\[
\LRa{u, v }_h = \LRa{u, v }, \quad u \in \Q_1\Lm^k(\mc{T}_h), \: v \in W_h^k.
\]
As a consequence, \eqref{eq:B-cond} holds for $\tilde{V}_h^k = \Q_1^-\Lm^k(\mc{T}_h) \subset \Q_1\Lm^k(\mc{T}_h)$.
However, note that in general the space $\mc{S}_1^+ \Lm^{k}(\mc{T}_h)$ will contain quadratic terms and therefore 
the identity \eqref{eq:B-cond} will not hold if  $\tilde{V}_h^k$ is replaced by  $V_h^k$.
To complete the verification of condition {\bf (B) } we have to define a projection $\Pi_h : \mc{S}_1^+ \Lm^{k}(\mc{T}_h) \to 
 \Q_1^-\Lm^k(\mc{T}_h)$ which satisfies $d \Pi_h u = du$. We define this projection by the degrees of freedom 
\eqref{DOF-Q1}, i.e.,
\[
\int_f \tr_f \Pi_h u  = \int_f \tr_f  u  , \quad f \in \lap_k(\mc{T}_h).
\]
Note that it follows from the definition of the spaces $\mc{S}_1^+ \Lm^{k}(\mc{T}_h)$ and $\Q_1^-\Lm^k(\mc{T}_h) $
that both $d\Pi_h u$ and $du$ are piecewise constant forms, and by Stokes' theorem they are equal.
Furthermore, since the degrees of freedom of $\Q_1^-\Lm^k(\mc{T}_h) $ is a subset of the degrees of freedom of
$\mc{S}_1^+ \Lm^{k}(\mc{T}_h)$, the uniform $L^2$ boundedness of $\Pi_h$ is a consequence of equivalence 
of the $L^2$ norm and a discrete norm defined by the degrees of freedom on each of these spaces.
Finally, it remains to verify \eqref{eq:B-cond2}, i.e., we need verify that 
\begin{equation}\label{eq:B-cond2b}
\LRa{\Pi_h u, v }_h = \LRa{u, v }_h, \quad u \in \mc{S}_1^+ \Lm^{k}(\mc{T}_h), v \in W_h^k.
\end{equation}
To see this we observe that 
\algns{
\LRa{u, v }_{h,T} &= 2^{-n} |T| \sum_{x \in \lap_0(T)} \LRa{u_{x}, v_{x} }_{\Alt} \\
&= 2^{-n} |T| \sum_{f \in \lap_k(T)} \sum_{x \in \lap_0(f)} \LRa{(\tr_f u)_x ,(\tr_f v)_x}_{\Alt(f)},
}
where the subscript $\Alt(f)$ indicates the inner product of alternating $k$-forms on $f$. Furthermore, since 
$\tr_f u \in \Q_1\Lm^k(f)$ by \eqref{k-trace}
and $\tr_f v \in \mc{P}_0 \Lm^k(f)$, we have 
\[
\begin{split}
2^{-k} \sum_{x \in \lap_0(f)} \LRa{(\tr_f u)_x ,(\tr_f v)_x}_{\Alt(f)} &= | f |^{-1} \int_f \LRa{ \tr_f u, \tr_f v}_{\Alt(f)} \: \vol_f \\
&= | f |^{-1} \int_f \LRa{ \tr_f \Pi_h u, \tr_f v}_{\Alt(f)} \: \vol_f \\
&= 2^{-k} \sum_{x \in \lap_0(f)} \LRa{(\tr_f \Pi_h u)_x ,(\tr_f v)_x}_{\Alt(f)},
\end{split}
\]
and hence the desired identity \eqref{eq:B-cond2b} holds. We have therefore verified
condition {\bf (B) }.

Finally, we need to convince ourselves that the corresponding operator $d_h^*$, defined by 
\[
\LRa{ d_h^* u, \tau}_h = \LRa{  u, d\tau }, \quad u \in \Q_1^-\Lm^k(\mc{T}_h), \tau 
\in \mc{S}_1^+ \Lm^{k}(\mc{T}_h),
\]
is local. However, since the mass matrix $\LRa{\psi_{f,x}, \psi_{g,y} }_h$ is block diagonal, where the blocks correspond to 
the vertices of $\mc{T}_h$, we can argue exactly as we did 
 in the proof of Theorem~\ref{local-simplicial} above to establish this property.

\section{Concluding remarks} \label{conclusion}
We have carried out the construction of finite element methods for the Hodge Laplace problems that admit local approximations of the coderivatives. Constructions are performed both with respect to simplicial and cubical meshes. These methods will therefore correspond to methods where the approximation of local constitutive laws are local, in contrast to the properties of more standard mixed finite 
element methods. The methods are of low order, and can also be seen as finite difference methods. However, 
an advantage of our approach is that there is a natural path to convergence estimates, based on 
standard finite element theory and variational crimes.

In the study above we have only considered Hodge-Laplace problems with constant  coefficients. However, 
as we have stated in Section~\ref{prelim}  above, the discussion above can easily be adapted to variable coefficients.  
To be more precise, the challenge for such a construction is to allow variable coefficients in the  
$dd^*$ term of the operator $L$, cf. \eqref{hodge-Lap-strong}. Consider the case when this term is modified to a term of the form $d \bs{K} d^*$.  We then end up in the situation where 
the corresponding weighted inner products 
\[
\LRa{ u, v}_{\bs{K}} := \int_{\Omega} \LRa{ \bs{K}^{-1}u_x, v_x}_{\Alt} \, dx 
\]
have to be replaced by a quadrature rule, $\LRa{ u, v}_{\bs{K},h}$, leading to a block diagonal ``mass matrix'',
and satisfying conditions {\bf (A)} and {\bf (B)}. 
Here we assume that $\bs{K}$ is piecewise constant with respect to the 
mesh $\mathcal{T}_h$, and that for each $T \in \mathcal{T}_h$ the operator $\bs{K}_T : \Alt^{k} \to \Alt^{k}$ is symmetric and positive definite 
with respect to the inner product $\LRa{ \cdot, \cdot }_{\Alt}$, and with uniform upper and lower bounds on its eigenvalues. 
In the present setting we simply change the term
\[
\sum_{x \in \Delta_0(T)} \LRa{ u_x, v_x }_{\Alt} \quad \text{to} \quad \sum_{x \in \Delta_0(T)} \LRa{\bs{K}_{x,T}^{-1} u_x, v_x }_{\Alt}
\]
in the quadrature rules \eqref{quadrature-s} and \eqref{quadrature-c}.
Here $\bs{K}_{x,T}$ denotes the value of $\bs{K}$ at the vertex $x$, and taken from the element $T \in \mathcal{T}_h$.
With these definitions it is straightforward to 
verify condition {\bf (A)}, and condition {\bf (B)} in the simplicial case. 
The same holds for the identity \eqref{eq:B-cond} in the cubical case, where the spaces $\tilde{V}_h^k$ and $W_h^k$  are defined as in Section~\ref{local-c}. Furthermore, 
we can 
utilize the symmetry 
of $\bs{K}$ to verify \eqref{eq:B-cond2b} that 
\[
\LRa{\Pi_h  u,v }_{\bs{K},h } = \LRa{  u,v }_{\bs{K}, h }, \quad u \in \mc{S}_1^+ \Lm^{k}(\mc{T}_h), v \in W_h^k . 
\]
Here the operator $\Pi_h$ is defined exactly as in Section~\ref{local-c} above.

There are also other possible quadrature rules than the ones we have used  above that we could have considered in the present study. In particular, 
in the cubical case the combination of Gauss and Lobatto points, as suggested in \cite{MonkCohen1998},
may even lead to higher order methods. However, this approach is based on the use of different quadrature rule
in each component, and therefore it will lead to methods which are not robust for variable coefficients, in contrast to the above. 
Since the operator $\bs{K}$ may  mix the components, this will affect the accuracy of the quadrature rule,
and the lowest order methods may not even converge. Therefore, we have not 
considered this approach in this paper.

\providecommand{\bysame}{\leavevmode\hbox to3em{\hrulefill}\thinspace}
\providecommand{\MR}{\relax\ifhmode\unskip\space\fi MR }
\providecommand{\MRhref}[2]{%
  \href{http://www.ams.org/mathscinet-getitem?mr=#1}{#2}
}
\providecommand{\href}[2]{#2}

\bibliographystyle{amsplain}
\vspace{.125in}

\end{document}